\documentclass[11pt]{article}

\usepackage{amsmath,amssymb,amsthm}
\usepackage{tikz}
\usepackage[margin=1.1in]{geometry}
\usepackage[colorlinks=true,linkcolor=blue,citecolor=blue,urlcolor=blue]{hyperref}
\usepackage{comment}
\usepackage{thm-restate}
\newtheorem{theorem}{Theorem}[section]
\newtheorem{lemma}[theorem]{Lemma}
\newtheorem{observation}[theorem]{Observation}

\newtheorem{conjecture}[theorem]{Conjecture}
\newtheorem{remark}[theorem]{Remark}

\newcommand{\diameter}{\operatorname{diam}}
\newcommand{\ordiam}
{\overrightarrow{\operatorname{diam}}}
\newcommand{\Int}{\operatorname{Int}}
\newcommand{\Ext}{\operatorname{Ext}}

\makeatletter
               {\list{}{\leftmargin=0pt % <------- Adjust this length
                        \labelwidth\z@ \itemindent-\leftmargin
                        }}%
               {\endlist}
\makeatother

\newcommand{\diam}{\operatorname{diam}}

\title{Improved bounds on the oriented diameter of planar triangulations}

\author{Xiaonan Liu \thanks{Department of Mathematics, Louisiana State University, Baton Rouge, LA, 70803
({\tt xliu20@lsu.edu}).}
}
\begin{document}

\maketitle

\begin{abstract}
The {oriented diameter} of a connected bridgeless graph $G$, denoted by
$\ordiam(G)$, is the minimum diameter among all strong orientations of
$G$. We study the oriented diameter of planar triangulations, and show that $\ordiam(G)\leq \frac{2n+44}{5}$ for any $n$-vertex planar triangulation $G$. This improves the leading constant in the previous best general upper bound $\lceil \frac{n}{2}\rceil$, due to Ge, Liu, and Wang, from $1/2$ to $2/5$. We also prove that every $n$-vertex $4$-connected planar triangulation satisfies $\ordiam(G)\leq \frac{n+17}{3}$.
\end{abstract}

\section{Introduction}\label{sec:introduction}

An \emph{orientation} of an undirected graph $G$ is obtained by assigning one of the two possible directions to every edge of $G$. An orientation is \emph{strong} if the resulting digraph is strongly connected. By a classical theorem of Robbins~\cite{Robbins1939}, a connected graph admits a strong orientation if and only if it is bridgeless. For a connected bridgeless graph $G$, the \emph{oriented diameter} of $G$ is
\[
    \ordiam(G)
    :=\min\{\diameter(D):D\text{ is a strong orientation of }G\}.
\]
The oriented-diameter problem asks how efficiently the edges of a bidirectional network can be directed while preserving short routes between every ordered pair of vertices. It is therefore naturally
motivated by network design and routing. For example, when two-way
streets are converted into one-way streets, global accessibility should
be preserved without introducing excessively long detours. For planar graphs, the problem also interacts with several classical themes in structural graph theory, including connectivity, planar duality, separating cycles, and recursive decompositions.

The systematic study of oriented diameter began with Chv\'atal and Thomassen~\cite{ChvatalThomassen1978}. They proved that deciding whether a graph admits an orientation of diameter at most two is NP-complete. They also showed that every bridgeless graph of diameter $d$ has oriented diameter at most $2d^2+2d$, and constructed examples for which every strong orientation has diameter at least $\frac12d^2+d$. The general upper bound was subsequently improved by Babu, Benson, Rajendraprasad and Vaka~\cite{BBRV2021}. Oriented diameter has also been studied in terms of the domination number, the minimum and maximum degrees, and other structural parameters; see, for example,~\cite{FMPR2004,BauDankelmann2015,Surmacs2017,DGS2018,CDS2019,Cochran2024,CochranWang2026} and the survey of Koh and Tay~\cite{KohTay2002}. For graphs of diameter three, Wang and Chen~\cite{WangChen2022} determined the optimal universal bound to be nine.

In this paper, we study the problem for planar triangulations. A \emph{planar triangulation} is a planar graph that admits a plane embedding in which every face is bounded by a triangle. A \emph{plane triangulation} is a planar triangulation together with a fixed such embedding in the plane. 
 From an algorithmic perspective, Eggemann and Noble~\cite{EggemannNoble2012} proved that, for every fixed positive integer $k$, one can decide in linear time whether a planar graph admits an orientation of diameter at most $k$.

The first general bound specifically for planar triangulations was obtained by Mondal, Parthiban, and Rajasingh~\cite{MPR2024}. They determined the oriented diameter of triangular grid graphs, proved that every $n$-vertex planar triangulation has oriented diameter at most
$\frac n2+O(\sqrt n)$, and constructed, for every $n$ divisible by three, an $n$-vertex planar triangulation with oriented diameter at least $n/3$. Their lower-bound examples are the \emph{nested-triangle triangulations}: the vertex set is partitioned into a sequence of nested triangles, and each annular region between two consecutive triangles is triangulated in a fixed way. In these examples, two vertices on the innermost and outermost triangles lie on no cycle of length less than $2n/3$, forcing one of the two directed distances between them to be at least $n/3$ in every strong orientation.

The upper bound was later improved by Ge, Liu and Wang~\cite{GLW2025}, who proved that every $n$-vertex $2$-connected near triangulation has oriented diameter at most $\lceil \frac n2\rceil$, apart from seven explicitly described small exceptions. Their result is best possible for near triangulations because maximal outerplanar graphs may have oriented diameter $\lceil \frac n2\rceil$; the sharp maximal-outerplanar bound was established by Wang, Chen, Dankelmann, Guo, Surmacs, and Volkmann~\cite{WCDGSV2021}. In particular, apart from finitely many small graphs, the best known universal upper bound for planar triangulations is $\lceil \frac n2\rceil$.

Stronger bounds are known under additional connectivity assumptions. Wang~\cite{Wang2025} studied both the oriented diameter and the oriented radius of triangulations. Among other results, Wang~\cite{Wang2025} proved that every $5$-connected $n$-vertex planar triangulation has oriented diameter at most $\frac{2n}{5}+O(1)$. 

For arbitrary planar triangulations, however, the principal gap remained between the lower bound $n/3$ provided by nested triangles and the general upper bound $n/2$. These results naturally suggest the following
conjecture.
\begin{conjecture}\label{conj:main}
There exists an absolute constant $C$ such that every $n$-vertex planar triangulation $G$ satisfies
\[
    \ordiam(G)\le \frac n3+C.
\]
\end{conjecture}

Conjecture~\ref{conj:main}, if true, would be asymptotically best possible. Although we do not resolve the conjecture in this paper, we improve the general upper bound from $n/2+O(1)$ to $2n/5+O(1)$. We also prove Conjecture~\ref{conj:main} for $4$-connected planar triangulations. Both results are obtained by combining two complementary estimates: one is effective when the ordinary diameter is small, while the other becomes stronger as the ordinary diameter increases.

Motivated by the general relationship between diameter and oriented diameter studied by Chv\'atal and Thomassen~\cite{ChvatalThomassen1978}, we first establish a linear bound for planar triangulations in terms of their ordinary diameter.

\begin{restatable}{theorem}{diambound}\label{thm:ordiam_vs_diam}
Let $G$ be a planar triangulation such that $G\neq K_4$. Then $G$ admits a strong orientation $D$ such that $d_D(u,v)\leq 2d_G(u,v)$ for any $u,v\in V(G)$. In particular, we have that \[\ordiam(G)\leq 2\diam(G).\]
\end{restatable}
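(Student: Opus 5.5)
The plan is to reduce the statement to a local property of the orientation. Suppose $D$ is an orientation of $G$ in which every edge lies on a directed triangle. This means that for every arc $u\to v$ of $D$ there is a vertex $w$ with $v\to w\to u$ in $D$. Then every edge $uv$ of $G$ can be traversed in either direction by a directed walk of length at most $2$: we use the arc itself in one direction and the two arcs through $w$ in the other. Take a shortest $u$--$v$ path in $G$ and replace each of its edges by such a directed walk of length at most $2$. This gives $d_D(u,v)\le 2d_G(u,v)$, so $D$ is strong and $\ordiam(G)\le 2\diam(G)$. The whole task is therefore to find an orientation in which every edge lies on a directed triangle. I would look for one in which the directed triangles are facial triangles of a fixed plane embedding.

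To construct it, fix a plane embedding of $G$ and let $G^*$ be its dual. The key observation concerns a face $f$ oriented cyclically clockwise and an adjacent face $g$ sharing the edge $e$. The clockwise orientation of $g$ traverses $e$ in the opposite direction to the clockwise orientation of $f$. Hence $f$ and $g$ can both be directed facial cycles consistently exactly when one is clockwise and the other counterclockwise.

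So it suffices to find a set $I$ of faces and a labelling of the remaining faces as clockwise/counterclockwise satisfying two conditions.
\begin{itemize}
\item[(i)] Every edge has at least one incident face outside $I$.
\item[(ii)] Two adjacent faces outside $I$ receive different labels.
\end{itemize}
Condition (i) says $I$ is an independent set of $G^*$, and condition (ii) says $G^*-I$ is properly $2$-coloured. Together they amount to a proper $3$-colouring of $G^*$ with colour classes $I$, $A$ (clockwise), $B$ (counterclockwise). Given such a colouring, orient every edge according to the unique direction imposed by its incident face(s) in $A\cup B$. If both incident faces lie in $A\cup B$, then by (ii) one is in $A$ and the other in $B$, so the two imposed directions agree and the orientation is well defined. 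Every edge then lies on a directed facial triangle.

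The main step, and the only place where $G\ne K_4$ enters, is producing the $3$-colouring of $G^*$. For a simple plane triangulation on $n\ge 4$ vertices, $G$ is $3$-connected, so $G^*$ is a simple $3$-connected cubic graph. Since $G^*=K_4$ exactly when $G=K_4$, Brooks' theorem gives $\chi(G^*)\le 3$ whenever $G\ne K_4$. I would double-check the simplicity of $G^*$, in particular that no two faces share two edges, which follows from $3$-connectivity for $n\ge4$.

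The degenerate case $n=3$ (where $G=K_3$ and $G^*$ has a triple edge) is handled directly by orienting the triangle cyclically. For $K_4$ no valid $3$-colouring exists, consistent with its exclusion in the statement.
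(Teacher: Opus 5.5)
Your proposal is correct and is essentially the paper's proof. Both take a Brooks $3$-colouring of the cubic dual $G^*\neq K_4$, turn two colour classes into oppositely oriented directed facial triangles and leave the third (your $I$) free, so every arc lies on a directed triangle and each edge of a shortest $uv$-path becomes a directed walk of length at most $2$; the paper encodes the same orientation as ``the lower-coloured incident face lies on the left''. One point needs care: read ``clockwise'' relative to each face (with the face on the right, as on the sphere), because for the outer face the literal planar clockwise direction would make your opposite-direction claim fail along the outer triangle.
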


Theorem~\ref{thm:ordiam_vs_diam} gives a strong estimate when $\diameter(G)$ is small. Note that it follows from Menger's theorem that every $n$-vertex $k$-connected  graph has diameter at most $\lfloor \frac{n+k-2}{k}\rfloor$. Hence, Theorem~\ref{thm:ordiam_vs_diam} yields the following bounds for planar triangulations of different connectivities.
\begin{theorem}
 For $k\in \{3, 4, 5\}$, every $n$-vertex $k$-connected planar triangulation  $G$ has 
 \[\ordiam(G)\leq 2\lfloor \frac{n+k-2}{k}\rfloor.\]
\end{theorem}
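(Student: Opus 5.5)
The theorem follows by combining Theorem~\ref{thm:ordiam_vs_diam} with the diameter bound for $k$-connected graphs. I will use $\ordiam(G)\le 2\diam(G)$, so it suffices to show that every $n$-vertex $k$-connected graph $G$ satisfies $\diam(G)\le \lfloor \frac{n+k-2}{k}\rfloor$, and to check the exceptional case $G=K_4$ separately.

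For the diameter bound, I fix $u,v$ with $d_G(u,v)=d=\diam(G)$. For $0\le i\le d$, I let $L_i=\{x: d_G(u,x)=i\}$ be the distance layers from $u$. The layers $L_0$ and $L_d$ are nonempty. For each $1\le i\le d-1$, the layer $L_i$ separates $u$ from $v$, because every $u$--$v$ path meets every intermediate layer. Since $G$ is $k$-connected, each such layer therefore has $|L_i|\ge k$. (Alternatively, Menger's theorem gives $k$ internally disjoint $u$--$v$ paths, each of which hits every $L_i$.) This gives
\[
 n\ge 2+k(d-1),
\]
so $d\le \frac{n+k-2}{k}$. Since $d$ is an integer, $d\le \lfloor \frac{n+k-2}{k}\rfloor$. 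The one degenerate situation is $d\le 1$, i.e.\ $G$ complete; then the bound is trivial because $n\ge k+1$ makes the right-hand side at least $1$.

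The remaining obstacle is $G=K_4$, which Theorem~\ref{thm:ordiam_vs_diam} excludes. $K_4$ is $3$-connected but not $4$- or $5$-connected, so only $k=3$ needs checking. There the bound reads $2\lfloor \frac{4+1}{3}\rfloor=2$. However, $K_4$ has no orientation of diameter $2$: it is known that $\ordiam(K_4)=3$, since a tournament on $4$ vertices with diameter $2$ does not exist. So the statement as written seems to fail for $K_4$ at $k=3$, and the intended reading must tacitly exclude $G=K_4$ (or $n\ge 5$), in line with Theorem~\ref{thm:ordiam_vs_diam}. I would add this exclusion explicitly.

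With that caveat, the proof is the two-line combination above: the layer-counting argument bounds $\diam(G)$, and Theorem~\ref{thm:ordiam_vs_diam} doubles that bound to control $\ordiam(G)$. For $k\in\{4,5\}$ no exception arises, since $K_4$ is not $4$-connected.
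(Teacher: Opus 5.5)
Your argument is the same as the paper's: bound $\diam(G)\le\lfloor\frac{n+k-2}{k}\rfloor$ for $k$-connected graphs (the paper cites Menger's theorem, and your layer count is the equivalent direct version), then apply $\ordiam(G)\le 2\diam(G)$ from Theorem~\ref{thm:ordiam_vs_diam}. Your $K_4$ caveat is correct and points to a real oversight in the statement: $K_4$ is a $3$-connected planar triangulation with $\ordiam(K_4)=3>2=2\lfloor\frac{5}{3}\rfloor$ (the paper itself uses $\ordiam(K_4)=3$), so for $k=3$ the hypothesis $G\neq K_4$ from Theorem~\ref{thm:ordiam_vs_diam} has to be added.
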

Note that, for planar triangulations with connectivity $3$, the above bound is weaker than the general bound  $\lceil \frac n2\rceil$ due to Ge, Liu, and Wang, while in the connectivity-$4$ case, it is asymptotically the same as $\lceil \frac n2\rceil$. However, for $5$-connected planar triangulations, it gives the bound $2\lfloor \frac{n+3}{5}\rfloor$, which is asymptotically the same as the bound obtained by Wang~\cite{Wang2025}.

The following result handles the case in which $G$ has large diameter.

\begin{restatable}{theorem}{longdiam}\label{thm:long_diameter}
  Let $G$ be an $n$-vertex planar triangulation. Then \[\ordiam(G)\leq \frac{n-\diam(G)}{2}+11.\]  
\end{restatable}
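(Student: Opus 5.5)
Take a diametral pair $x,y$ with $d_G(x,y)=\diam(G)=d$ and BFS layers $L_i=\{v: d_G(x,v)=i\}$, $0\le i\le d$. We will build a strong orientation whose diameter is controlled by the total number of vertices minus a saving of about one vertex per layer. Since $G$ is a triangulation, each layer $L_i$ with $1\le i<d$ separates $x$ from $y$, and the subgraph induced on $L_i$ contains a cycle $C_i$ separating $L_{<i}$ from $L_{>i}$. This uses the standard fact that in a plane triangulation the boundary of the component containing $x$ of $G-L_i$ is a cycle within $L_i$. So $G$ contains a nested sequence of separating cycles $C_1,\dots,C_{d-1}$, and in particular $|C_i|\ge 3$. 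Hence $n\ge 3(d-1)+2$, which shows the structure is a nested-cycle decomposition as in the nested-triangle examples.

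The plan is to orient each $C_i$ as a directed cycle, alternating orientation (clockwise for odd $i$, counterclockwise for even $i$), and to orient the edges between consecutive cycles (and the remaining vertices in the annuli) so that every vertex reaches and is reached from its nearby cycle quickly. A vertex on $C_i$ travelling to $C_j$ should go radially, using edges between layers, and only walk along a cycle at the endpoints. The key quantitative step is to show that going around each directed cycle $C_i$ costs at most $|C_i|-1$ steps. Rather than traversing a cycle, a path crosses the annuli radially, paying one step per layer. The two "end" costs are then bounded by half of the remaining vertex mass. The idea, as in Ge--Liu--Wang, is that for two vertices $u,v$ a round trip $u\to v\to u$ can be routed through a closed walk whose length is at most roughly $n-d+O(1)$: one radial step per layer on each direction plus portions of cycles. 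We then pick the orientation (possibly choosing the cutting positions or the alternation) so that the longer direction is at most half of the round trip plus a constant, giving $(n-d)/2+11$.

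Concretely, I would prove an inductive statement on the annuli. Orient the edges of the triangulated annulus between $C_i$ and $C_{i+1}$ so that from every vertex of $C_i$ there is a directed edge to $C_{i+1}$ and one entering from $C_{i+1}$, within $O(1)$ local detour. This is possible since each vertex of $C_i$ has a neighbor on $C_{i+1}$ by the BFS property, and triangular faces give flexibility. Vertices not on any $C_i$ (those inside faces of the layer structure, i.e.\ in "pockets" bounded by chords) are handled by applying the Ge--Liu--Wang $\lceil m/2\rceil$ bound for near triangulations to each pocket, with the pocket's vertex count charged against $n$. Then, for a target pair, one direction uses radial steps (about $|i-j|$) plus at most half the cycle lengths at the ends, and the accounting $\sum_i(|C_i|-1)\le n-d$ gives the bound.

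The main obstacle will be the accounting: making sure the radial steps can genuinely be taken without being forced around each cycle. Because each cycle is directed one way, a radial crossing may need to travel partway along $C_{i+1}$ before finding an outgoing edge to $C_{i+2}$. I would try to orient the inter-cycle edges in a "zig-zag" so that, along each cycle, entry and exit points for both directions alternate. The extra walking is then bounded by a telescoping sum of arc lengths totaling at most half the cycle sizes. The constant $11$ will absorb the boundary layers $L_0,L_1,L_{d-1},L_d$ and the parity losses.
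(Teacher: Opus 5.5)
There is a genuine gap. The central quantitative step is asserted rather than proved, and the mechanism you describe cannot deliver it.

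The target bound is essentially tight on the nested-triangle graphs. There $n\approx 3\diam(G)$, so $\frac{n-\diam(G)}{2}+11$ is only $\diam(G)+O(1)$. Hence, for a pair on the innermost and outermost triangles, your single fixed orientation must give directed paths in both directions that spend one step per layer, with only $O(1)$ total loss. Your proposal falls short of this in several places:
\begin{itemize}
\item It explicitly allows an ``$O(1)$ local detour'' per annulus and partial walks along $C_{i+1}$ at each crossing. Over $\diam(G)$ layers this is $\Theta(\diam(G))$ extra, which is already too much. Charging ``half the cycle sizes'' for the walking gives about $\diam(G)+\frac12\sum_i|C_i|$, which can be close to $\diam(G)+\frac n2$.
\item The local requirement itself can fail. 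A vertex of $C_i$ may have a unique neighbor on $C_{i+1}$, so it cannot have both an out-arc to and an in-arc from $C_{i+1}$.
\item A directed cycle $C_i$ has diameter $|C_i|-1$, so ``half the cycle lengths at the ends'' needs an argument you do not give.
\item The balancing step is invalid. The orientation must be fixed before the pair is chosen, and a short closed walk through $u$ and $v$ bounds $\max\{d_D(u,v),d_D(v,u)\}$ only by its full length, not half of it.
\item $L_{>i}$ need not be connected, so $L_i$ only contains a cycle separating $x$ from $y$. The other components are pockets that can span many layers, and gluing in their Ge--Liu--Wang orientations adds distances your accounting does not cover.
\end{itemize}

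The idea you are missing is that the saving can come from deleting the interior of a single $uv$-path for a diametral pair, with no layer-by-layer orientation at all. The paper takes three internally disjoint $uv$-paths $P_0,P_1,P_2$, with $P_0$ inside $C_0=P_1\cup P_2$ and shortest there. Thus $P_0$ has $|V(P_0)|-2\ge\diam(G)-1$ internal vertices. The paper then builds $H\supseteq G-(V(P_0)\setminus\{u,v\})$ from two kinds of pieces:
\begin{itemize}
\item the near triangulation $G_0=\overline{\Ext(C_0)}$, oriented by Theorem~\ref{thm:near_triangulation_diameter};
\item the pieces inside $C_0$, each meeting $G_0$ in a single vertex $y_i$, oriented via Lemma~\ref{lem:special_orientation} so that all their vertices are within $\frac{n_i}{2}+O(1)$ of $y_i$ in both directions.
\end{itemize}
This yields $\ordiam(H)\le\frac{n+1-|V(P_0)|}{2}+9$. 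Every vertex outside $H$ is an internal vertex of $P_0$ with two neighbors in $H$, so adding them back costs only $2$. All the delicate balancing is delegated to the Ge--Liu--Wang bound, which is precisely what your per-layer construction would have to reprove from scratch.
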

Balancing the two bounds in Theorems~\ref{thm:ordiam_vs_diam} and~\ref{thm:long_diameter} gives the main result of this paper. Since the proof is short, we give it here.
\begin{theorem}\label{main}
Let $G$ be an $n$-vertex planar triangulation. Then \[\ordiam(G)\leq \frac{2n+44}{5}.\]
\end{theorem}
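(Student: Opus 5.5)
We balance Theorem~\ref{thm:ordiam_vs_diam} against Theorem~\ref{thm:long_diameter}, splitting on the value $d:=\diam(G)$. Theorem~\ref{thm:ordiam_vs_diam} gives the bound $2d$, and Theorem~\ref{thm:long_diameter} gives $\frac{n-d}{2}+11$. The first bound increases with $d$ and the second decreases, so their minimum is largest where they meet. That happens at $2d=\frac{n-d}{2}+11$, that is, $5d=n+22$ and $d=\frac{n+22}{5}$. There the common value is $\frac{2n+44}{5}$, which is exactly the claimed bound.

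\emph{Case 1: $d\le \frac{n+22}{5}$.} If $G\neq K_4$, Theorem~\ref{thm:ordiam_vs_diam} gives
\[
\ordiam(G)\le 2d\le \frac{2n+44}{5}.
\]

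\emph{Case 2: $d> \frac{n+22}{5}$.} Theorem~\ref{thm:long_diameter} applies to every planar triangulation and gives
\[
\ordiam(G)\le \frac{n-d}{2}+11<\frac{n}{2}-\frac{n+22}{10}+11=\frac{4n+88}{10}=\frac{2n+44}{5}.
\]

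The exceptional graph $G=K_4$ has $\diam(G)=1$. It can be handled directly by Theorem~\ref{thm:long_diameter}, which gives $\ordiam(K_4)\le \frac{4-1}{2}+11$. This is at most $\frac{8+44}{5}=10.4$ only if one checks numerically, and in fact $12.5>10.4$, so this route fails. Instead we handle $K_4$ by hand: $K_4$ has a strong orientation of small diameter (its oriented diameter is $3$), and $3\le \frac{52}{5}$.

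There is no real obstacle here. The only points needing care are the strictness of the inequality in Case~2 and remembering that $K_4$ is excluded from Theorem~\ref{thm:ordiam_vs_diam}.
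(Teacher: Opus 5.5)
Your proposal is correct and follows the paper's argument: treat $K_4$ directly via $\ordiam(K_4)=3<\frac{52}{5}$, then bound $\ordiam(G)$ by $\min\{2\diam(G),\frac{n-\diam(G)}{2}+11\}$, which never exceeds the crossover value $\frac{2n+44}{5}$. Your case split at $\diam(G)=\frac{n+22}{5}$ just spells out the paper's one-line minimum bound, and you can delete the abandoned attempt to handle $K_4$ via Theorem~\ref{thm:long_diameter}.
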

\begin{proof}
Let $G$ be an $n$-vertex planar triangulation. Since $\ordiam(K_4)=3< \frac{2\cdot4+44}{5}$, we may assume that $G\neq K_4$. By Theorems~\ref{thm:ordiam_vs_diam} and \ref{thm:long_diameter}, we have that \[\ordiam(G)\leq \min \{ 2\diam(G), {\frac{n-\diam(G)}{2}+11}\}\leq \frac{2n+44}{5}.\]
\end{proof}

For $4$-connected plane triangulations, we obtain a stronger large-diameter estimate.
\begin{restatable}{theorem}{highconn}\label{main:4-conn}
 Let $G$ be an $n$-vertex $4$-connected planar triangulation. Then \[\ordiam(G)\leq \frac{n-2\diam(G)}{2}+\frac{17}{2}.\]   
\end{restatable}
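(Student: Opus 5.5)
We prove Theorem~\ref{main:4-conn} by fixing a diametral pair and exploiting the layered structure of a triangulation around it. Let $d=\diam(G)$ and choose $x,y$ with $d_G(x,y)=d$. For $0\le i\le d$ let $L_i=\{v: d_G(x,v)=i\}$ be the distance layers from $x$. In a plane triangulation each layer $L_i$ with $1\le i<d$ separates $x$ from $y$, and therefore contains a cycle $C_i$ separating $L_{<i}$ from $L_{>i}$. This cycle can be taken to be the boundary of the region generated by the vertices at distance $\le i-1$, which is a cycle because faces are triangles. The cycles $C_1,\dots,C_{d-1}$ are nested, so $G$ resembles a nested-triangle triangulation. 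Four-connectivity means no separating triangles, so $|C_i|\ge 4$ for every $i$ whose cycle separates. This is where we gain over Theorem~\ref{thm:long_diameter}: the $d-1$ layers already contain at least $4(d-1)$ vertices, roughly, leaving at most $n-4d+O(1)$ "extra" vertices.

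\textbf{Building the orientation.} Orient each separating cycle $C_i$ as a directed cycle. Then choose two internally disjoint "spokes", namely shortest $x$--$y$ paths $P$ and $Q$ that meet each layer once. Such paths exist by Menger's theorem and $2$-connectivity, and can be chosen inside the layered structure. Orient $P$ from $x$ to $y$ and $Q$ from $y$ to $x$. The remaining edges, inside each annulus between $C_i$ and $C_{i+1}$ and in the "caps" near $x$ and $y$, are oriented so that every vertex can reach and be reached from an adjacent cycle in $O(1)$ steps. An ear-decomposition or fan argument inside triangulated annuli should do this.

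\textbf{Bounding distances.} To travel from $u\in L_i$ to $v\in L_j$ with $i\le j$, go along $C_i$ to $P$, move outward along $P$ for $j-i$ steps, and then go along $C_j$ to $v$. Alternatively, use $Q$ in the reverse direction when $j<i$. The cost of moving around a directed cycle can be as large as $|C_i|-1$. To control this, we alternate the directions of consecutive cycles, or reroute the spokes. The total cost then becomes $|C_i|+|C_j|$ plus the number of layers crossed, which must be shown to be at most $\tfrac12\sum_k(|C_k|-2)$ plus the cap vertices. Accounting gives
\[\ordiam(G)\le \frac{n-4d}{2}+d+O(1)=\frac{n-2d}{2}+O(1),\]
and the constant $17/2$ comes from the caps and small cases.

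\textbf{Main obstacle.} The hard step is ensuring that going around two cycles costs only about half of their combined length rather than all of it. The first thing to try is choosing, for each pair $(u,v)$, between the two spokes: $P$ and $Q$ split each $C_k$ into two arcs, and using both directions lets one traverse the shorter half. This needs a careful orientation in which each $C_k$ is oriented consistently with both spokes. That is, the two arcs of $C_k$ should be oriented oppositely, forming two directed paths from $P\cap C_k$ to $Q\cap C_k$, with triangle chords in the annuli providing returns. Verifying that this gives distance at most half the "excess" vertices plus the $d$ layers is where four-connectivity, via $|C_k|\ge4$ and no chords creating separating triangles, must be used.
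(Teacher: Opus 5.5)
There is a genuine gap. The step everything rests on is a strong orientation in which every ordered pair is joined by a directed path of length about (layers crossed)${}+\tfrac12$(excess). This orientation is never constructed, and neither orientation you describe has this property.

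If every $C_k$ is a directed cycle, a trip costs up to $|C_i|+|C_j|+|j-i|$. When two layers carry most of the vertices, this is close to $n$ rather than $n/2$, so the inequality you say ``must be shown'' is false. Suppose instead both arcs of $C_k$ run from $p_k:=P\cap C_k$ to $q_k:=Q\cap C_k$, with $P$ directed outward and $Q$ inward. Then from $u\in C_i\setminus\{p_i\}$ the cycle and spoke arcs lead only to $q_i$ and then inward, and $q_j$ has no out-arc on $C_j$. Hence every route from $u$ to a vertex $v\in C_j\setminus V(Q)$, whichever of $i,j$ is larger, passes through $x$ and has length at least $i+j$. In the nested-$4$-cycle triangulation ($n\approx 4d$, target about $d$) this is about $2d$. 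You defer the repair to unspecified ``triangle chords in the annuli providing returns''. But producing short routes for all ordered pairs simultaneously, using only about half of each cycle, is exactly the hard content of the problem; it is what Theorem~\ref{thm:near_triangulation_diameter} supplies. Nothing in the proposal shows the annulus edges can do it.

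Several supporting claims are also false.
First, $C_i$ separates $x$ from $y$, not $L_{<i}$ from $L_{>i}$. A component of $G-L_{\le i}$ avoiding $y$ (a ``pocket'', which may itself contain a long nested sequence of cycles) can have vertices at distance $\Omega(d)$ from every $C_k$. So no orientation lets every vertex reach and be reached from one of the $C_k$ in $O(1)$ steps. Your count ``$\tfrac12\sum_k(|C_k|-2)$ plus the cap vertices'' also omits such vertices entirely; their depth has to be charged to their number.
Second, Menger's theorem gives internally disjoint $x$--$y$ paths, not internally disjoint geodesics: any two geodesics share $y$'s neighbour in $L_{d-1}$ if it is unique. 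Non-geodesic spokes break the ``$|j-i|$ layers'' count.

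The idea you are missing is to avoid building the orientation from scratch. The paper proceeds as follows:
\begin{itemize}
\item It takes four Menger paths $Q_0,P_0,P_1,P_2$, using only that each has at least $d+1$ vertices.
\item It gets the factor $\tfrac12$ from Theorem~\ref{thm:near_triangulation_diameter} applied to $G_0=\overline{\Ext(P_1\cup P_2)}$.
\item It hangs each vertex of $P_0\cup Q_0$ on $G_0$ within distance $2$ in both directions. These at least $2d$ vertices add only $4$ to the diameter; this, not $|C_i|\ge 4$, is where the saving of $d$ comes from.
\item It orients the inside $F$ of $Q_0\cup P_0$ via Lemma~\ref{lem:near_trian_inteorior}. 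Here $4$-connectivity is used precisely to charge depth to size: $d_F(w,V(C))\le (n_i+3)/4$ for $w$ in a component of $F-C$ with $n_i$ vertices.
\end{itemize}
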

Combining Theorem~\ref{main:4-conn} with Theorem~\ref{thm:ordiam_vs_diam} establishes Conjecture~\ref{conj:main} for $4$-connected plane triangulations.
\begin{theorem}
  Let $G$ be an $n$-vertex $4$-connected planar triangulation. Then \[\ordiam(G)\leq \frac{n+17}{3}.\]     
\end{theorem}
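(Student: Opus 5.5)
The plan is to obtain the statement by balancing Theorem~\ref{thm:ordiam_vs_diam} against Theorem~\ref{main:4-conn}, in the same way that Theorem~\ref{main} was derived. First the trivial case: a $4$-connected planar triangulation has at least $6$ vertices, since the octahedron is the smallest one. Hence $G\neq K_4$, and Theorem~\ref{thm:ordiam_vs_diam} applies directly. Write $d=\diam(G)$. Then we have the two estimates
\[
\ordiam(G)\le 2d \qquad\text{and}\qquad \ordiam(G)\le \frac{n-2d}{2}+\frac{17}{2}=\frac{n+17}{2}-d .
\]

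Next we take the minimum of these two bounds as a function of $d$. The first bound increases in $d$ and the second decreases. If $2d\le \frac{n+17}{3}$, the first bound already gives the claim. Otherwise $d>\frac{n+17}{6}$, and the second bound gives
\[
\ordiam(G)<\frac{n+17}{2}-\frac{n+17}{6}=\frac{n+17}{3}.
\]
Equivalently, $\min\{2d,\frac{n+17}{2}-d\}$ is at most the convex combination $\frac13\cdot 2d+\frac23\left(\frac{n+17}{2}-d\right)=\frac{n+17}{3}$. Either way we get $\ordiam(G)\le\frac{n+17}{3}$.

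There is no real obstacle here, since the statement is an immediate corollary. The substantive work lies entirely in Theorem~\ref{main:4-conn}, which is presumably proved later by finding a long geodesic path and orienting the separating structure around it. For this corollary, the only point needing care is the verification that $G\neq K_4$, which holds because $K_4$ has only $4$ vertices and so cannot be $4$-connected.
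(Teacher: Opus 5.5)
Your proof is correct and follows the paper's own route: the paper derives this bound by combining $\ordiam(G)\le 2\diam(G)$ with $\ordiam(G)\le \frac{n-2\diam(G)}{2}+\frac{17}{2}$, whose minimum is at most $\frac{n+17}{3}$ (the two bounds are equal at $\diam(G)=\frac{n+17}{6}$). Your explicit check that $G\neq K_4$, because $K_4$ is not $4$-connected, is a detail the paper leaves implicit.
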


\medskip
\noindent
\textbf{Organization.}
The remainder of the paper is organized as follows. In Section~\ref{sec:preliminaries}, we collect additional definitions and known results, and prove Theorem~\ref{thm:ordiam_vs_diam}. We prove Theorem~\ref{thm:long_diameter} in Section~\ref{sec:long_diameter}, and prove Theorem~\ref{main:4-conn} in Section~\ref{sec:4-conn}.

\medskip

We conclude this section with some terminology and notation. For any positive integer $k$, let $[k]:=\{1,2, \ldots,k\}$. 

Let $G$ and $H$ be graphs. We use $G\cup H$ and $G\cap H$ to denote the union and intersection of $G$ and $H$, respectively. For any $S\subseteq V(G)$, we use $G[S]$ to denote the subgraph of $G$ induced by $S$, and let $G-S = G[V(G)\setminus S]$. For a subgraph $T$ of $G$, we often write $G-T$ for $G-V(T)$ and write $G[T]$ for $G[V(T)]$.  A path (respectively, cycle) is often represented as a sequence (respectively, cyclic sequence) of vertices, with consecutive vertices 
being adjacent.  Given a path $P$ and distinct vertices $x,y\in V(P)$, we use $xPy$ to denote the subpath of $P$ between $x$ and $y$.

Let $G$ be a plane graph. The {\it outer walk} of $G$ consists of vertices and edges of $G$ incident with the outer face of $G$. If the outer walk is a cycle in $G$, we call it  {\it outer cycle} instead. For a cycle $C$ in  $G$, we use $\overline{\Int_G(C)}$ and  $\overline{\Ext_G(C)}$ to denote the subgraphs of $G$ consisting of all vertices and edges contained in the bounded and unbounded closed regions of the plane determined by $C$, respectively. The {\it interior} of $C$ is $\Int_G(C):=\overline{\Int_G(C)}-C$, and the \emph{exterior} of $C$ is $\Ext_G(C):=\overline{\Ext_G(C)}-C=G-\overline{\Int_G(C)}$. When $G$ is clear from the context, we omit the subscript $G$. For convenience, we may also write $\overline{C}=\overline{\Int(C)}$ . %For any distinct vertices $u,v\in V(C)$, we use $uCv$ to denote the subpath of $C$ from $u$ to $v$ in clockwise order. 

\section{Preliminaries}\label{sec:preliminaries}

All graphs in this paper are finite and simple. A directed graph $D = (V(D),A(D))$ is a graph with a vertex set $V(D)$ and an edge set $A(D)$ consisting of ordered pairs of vertices, called \textit{arcs} or directed edges. We use $uv$ or $u\to v$ to denote the arc $(u,v)$, i.e., the arc oriented from $u$ to $v$. Given an undirected graph $G=(V(G),E(G))$, an \textit{orientation} of $G$ is a directed graph such that each edge in $E(G)$ is assigned a direction. Given a (directed) graph $D$ and two vertices $u,v \in V(D)$, the \textit{distance} from $u$ to $v$ in $D$, denoted by $d_D(u,v)$, is the number of edges of a shortest (directed) path from $u$ to $v$ in $D$; if no such path exists, set $d_D(u,v)=\infty$.  More generally, for nonempty vertex sets $X,Y\subseteq V(D)$, define
\[
    d_D(X,Y):=\min\{d_D(x,y):x\in X,\ y\in Y\}.
\]
When one of the sets consists of a single vertex, we omit the braces. The \emph{diameter} of a (directed) graph $D$ is
\[
    \diameter(D):=\max_{u,v\in V(D)}d_D(u,v).
\]

An edge $e \in E(G)$ is called a \textit{bridge} if $G-e$ is disconnected. A graph is called \textit{bridgeless} if it contains no bridge. A directed graph $D$ is called \textit{strongly connected} if for any two vertices $u,v \in V(D)$, there exists a directed path from $u$ to $v$. An orientation of a graph is \emph{strong} if it is strongly connected. Recall that Robbins~\cite{Robbins1939} showed in 1939 that every connected bridgeless graph has a strong orientation. The oriented diameter of a connected bridgeless graph $G$, denoted by $\ordiam(G)$, is defined as
$$\ordiam(G)=\min\{\diam(D):\textrm{$D$ is a strong orientation of $G$}\}.$$ An orientation $D$ of $G$ is called \textit{optimal} if $\ordiam(G)=\diam(D)$. 

\medskip
A \textit{near triangulation} is a plane graph such that every face except possibly the outer face is bounded by a triangle. In this paper, we call a graph $G$ a {near triangulation with outer cycle $C$} if $G$ admits a plane embedding in which the outer face is bounded by $C$ and every other face is bounded by a triangle. Let $K_4^{-}$ denote the graph obtained from $K_4$ by deleting an arbitrary edge. Let $W_5$ be the wheel graph on six vertices obtained by connecting every vertex of a $C_5$ to a new vertex. Let $G_6^1, G_6^2, G_6^3, G_8^1$ be the graphs in Figure \ref{fig:exceptions}.
 \begin{figure}[htb]
    \hbox to \hsize{
	\hfil
	\resizebox{2.8cm}{!}{\begin{tikzpicture}[
    scale=1,
    vertex/.style={circle, draw=black, fill=black, scale=0.5},
    redvertex/.style={circle, draw=red, fill=red, scale=0.5},
    arc/.style={->, >=stealth, line width=0.9pt}
]

\node[vertex]    (e1) at (180:1.4) {};
\node[vertex] (e2) at (120:1.4) {};
\node[vertex] (e3) at (60:1.4) {};
\node[vertex] (e4) at (0:1.4) {};
\node[vertex] (e5) at (-60:1.4) {};
\node[vertex]    (e6) at (-120:1.4) {};
\draw (e6) -- (e1);
\draw (e1) -- (e2);
\draw (e2) -- (e3);
\draw(e3) -- (e4);
\draw (e4) -- (e5);
\draw (e5) -- (e6);
\draw (e6) -- (e2);
\draw (e3) -- (e6);
\draw (e5) -- (e3);
\end{tikzpicture}}%
	\hfil
	\resizebox{2.8cm}{!}{\begin{tikzpicture}[
    scale=1,
    vertex/.style={circle, draw=black, fill=black, scale=0.5},
    redvertex/.style={circle, draw=red, fill=red, scale=0.5},
    arc/.style={->, >=stealth, line width=0.9pt}
]

\node[vertex]    (e1) at (180:1.4) {};
\node[vertex] (e2) at (120:1.4) {};
\node[vertex] (e3) at (60:1.4) {};
\node[vertex] (e4) at (0:1.4) {};
\node[vertex] (e5) at (-60:1.4) {};
\node[vertex]    (e6) at (-120:1.4) {};
\draw (e6) -- (e1);
\draw (e1) -- (e2);
\draw (e2) -- (e3);
\draw(e3) -- (e4);
\draw (e4) -- (e5);
\draw (e5) -- (e6);
\draw (e6) -- (e2);
\draw (e3) -- (e6);
\draw (e6) -- (e4);
\end{tikzpicture}}%
	\hfil
    \resizebox{3.5cm}{!}{\begin{tikzpicture}[
    scale=1,
    vertex/.style={circle, draw=black, fill=black, scale=0.65},
    redvertex/.style={circle, draw=red, fill=red, scale=0.5},
    arc/.style={->, >=stealth, line width=0.9pt}
]

\node [vertex] (v1) at (150:2) {};
\node [vertex] (v2) at (0,0) {};
\node [vertex] (v3) at (30:2) {};
\node [vertex] (v4) at (3,-0.5) {};
\node [vertex] (v5) at (1.73,-2) {};
\node [vertex] (v6) at (-90:2) {};

\draw (v1) -- (v2);
\draw (v2) -- (v3);
\draw (v3) -- (v4);
\draw (v4) -- (v5);
\draw (v5) -- (v6);
\draw (v6) -- (v1);
\draw (v2) -- (v6);
\draw (v3) -- (v6);
\draw (v3) -- (v5);
\draw (v1) -- (v3);

\end{tikzpicture}}%
    \hfil
    \resizebox{3.35cm}{!}{\begin{tikzpicture}[
    scale=1,
    vertex/.style={circle, draw=black, fill=black, scale=0.45},
    redvertex/.style={circle, draw=red, fill=red, scale=0.5},
    arc/.style={->, >=stealth, line width=0.9pt}
]

\node [vertex] (v1) at (-1,1) {};
\node [vertex] (v2) at (0,1) {};
\node [vertex] (v3) at (1,1) {};
\node [vertex] (v4) at (2,1) {};
\node [vertex] (v5) at (2,-1) {};
\node [vertex] (v6) at (1,-1) {};
\node [vertex] (v7) at (0,-1) {};
\node [vertex] (v8) at (-1,-1) {};

\draw (v1) -- (v2);
\draw (v2) -- (v3);
\draw (v3) -- (v4);
\draw (v4) -- (v5);
\draw (v5) -- (v6);
\draw (v6) -- (v7);
\draw (v7) -- (v8);
\draw (v8) -- (v1);
\draw (v2) -- (v7);
\draw (v3) -- (v6);
\draw (v1) -- (v7);
\draw (v3) -- (v7);
\draw (v3) -- (v5);

\end{tikzpicture}}%
    \hfil
    }
    \caption{Some small exceptions to Theorem \ref{thm:near_triangulation_diameter}: $G_6^1, G_6^2, G_6^3, G_8^1$ (from left to right).}
    \label{fig:exceptions}
    \end{figure}

 We first state the result of Ge, Liu, and Wang \cite{GLW2025} for $2$-connected near triangulations, which we use to prove Theorems~\ref{thm:long_diameter} and \ref{main:4-conn}. This result is also the best known asymptotic bound for general planar triangulations.
\begin{theorem}\cite{GLW2025}\label{thm:near_triangulation_diameter}
    Let $G$ be an $n$-vertex $2$-connected near triangulation that is not one of the graphs in $\{K_4^-, K_4, W_5, G_6^1, G_6^2, G_6^3, G_8^1\}$. Then 
    $\ordiam(G)\leq \lceil{\frac{n}{2}}\rceil$.
\end{theorem}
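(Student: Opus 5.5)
We would prove the theorem by induction on $n$. The small cases, including the seven listed exceptions and their near relatives, are checked directly. The inductive step is an \emph{amortized reduction}: find a small piece of $G$ whose deletion leaves a $2$-connected near triangulation $G'$ with $n-2$ vertices. An orientation of $G'$ of diameter at most $\lceil\frac{n-2}{2}\rceil$ is then extended to $G$ with diameter growing by at most one. The plain statement does not carry enough information to do this, so we would strengthen the induction hypothesis. Concretely, we would require that the orientation $D$ of $G$ has $\diam(D)\le\lceil n/2\rceil$ and, in addition, that every vertex lies within distance $\lceil n/2\rceil-1$, in both directions, of some designated directed edge on the outer cycle $C$. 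The exceptional graphs are exactly where this stronger statement has to be checked by hand.

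The reductions come from the structure of $C$. If $C$ has a chord $xy$, then $G$ splits into two smaller $2$-connected near triangulations $G_1,G_2$ sharing the edge $xy$. We orient both by induction with $xy$ designated and oriented the same way in both, so that the two orientations glue consistently. Distances between $G_1$ and $G_2$ then route through $x$ or $y$, and the strengthened invariant bounds them by $\lceil n_1/2\rceil+\lceil n_2/2\rceil-1\le\lceil n/2\rceil$, since $n_1+n_2=n+2$.

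If $C$ is chordless, every outer vertex has interior neighbours. We pick consecutive outer vertices $u,w$ and delete them. Their interior neighbours then become outer, and the result is again a near triangulation; if this creates a cut vertex, we pick a different pair or fall back to the chord case. The deleted vertices are then re-attached, each lying on a directed triangle with an edge of $G'$. For an ear $u$ on the edge $ab$ with $a\to b$ in $D'$, we orient $b\to u\to a$, which gives
\[
d(u,x)\le 1+d(a,x),\qquad d(x,u)\le d(x,b)+1 .
\]

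The main obstacle is the pair $u,w$ itself. Naively $d(u,w)\le d(a,b')+2$, which costs $+2$ for two added vertices, whereas the budget is $+1$. We would try first to orient the path through $u$ and $w$ consistently: make $u$ and $w$ adjacent via the outer edge $uw$, with that edge forming one directed path $b\to u\to w\to a'$. Then $u$ and $w$ share their entry and exit points, and their mutual distance is at most $1$ in one direction and bounded through $G'$ in the other. The designated-edge invariant is what should absorb the remaining $+1$.

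Making this case analysis close, particularly when $u$ or $w$ has degree $3$ or when $G'$ falls into one of the exceptional graphs, is where we expect most of the work to lie. Those configurations would be handled by choosing a different reducible pair or by checking them directly.
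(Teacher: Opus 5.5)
Your proposal has genuine gaps, and the first one is fatal. The chord step fails, and it fails exactly where the theorem is delicate. Take the reading of your invariant under which the gluing computation makes sense: the designated arc is $x\to y$, and $d(p,x),\,d(y,p)\le\lceil n/2\rceil-1$ for all $p$. The weaker reading, distance to or from the set $\{x,y\}$, only makes gluing worse, since you may then have to travel from $y$ back to $x$. Routing $p\in G_1$ to $q\in G_2$ as $p\to x\to y\to q$ gives $\lceil n_1/2\rceil+\lceil n_2/2\rceil-1$. When $n_1,n_2$ are both odd, $n=n_1+n_2-2$ is even and this equals $\frac{n+4}{2}-1=\lceil n/2\rceil+1$; for example $n_1=n_2=5$, $n=8$ gives $5>4$.

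This is not a bookkeeping slip. $K_4^-$ is two triangles glued along an edge, and $G_6^1$ is a triangle glued onto an end edge of the $5$-vertex fan; both are exceptions.

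There is a second problem in the chord step. You use the hypothesis at the \emph{prescribed} edge $xy$ of each piece, so it must hold for every outer edge, not for ``some'' edge. In that form it is false, even for a non-exceptional graph. Take the fan with centre $c$ and path $a_1a_2a_3a_4$, and designate $a_1\to a_2$.
\begin{itemize}
\item Since $N(a_1)=\{a_2,c\}$, strong connectivity forces $c\to a_1$.
\item Then $d(p,a_1)\le 2$ forces $a_2\to c$, $a_3\to c$ and $a_4\to c$.
\item So $a_1$ is the only out-neighbour of $c$, and $a_2$ is the only out-neighbour of $a_1$.
\item Hence $d(c,a_4)\ge 4>\lceil 5/2\rceil$.
\end{itemize}
Reversing all arcs disposes of the designation $a_2\to a_1$. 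Gluing a triangle onto $a_1a_2$ produces precisely $G_6^1$.

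A piece $G_i$ may also itself be one of the seven exceptions. These violate even the diameter bound, so ``checking them by hand'' cannot supply the hypothesis for them. What is missing is a parity-sensitive strengthened statement, together with a rule for which chord to cut. In particular you need a plan for when every chord splits $G$ into two odd parts.

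The chordless step is likewise only a plan. You correctly locate the problem: re-attaching the pair $u,w$ costs $+2$ in $d(w,u)$ against a budget of $+1$. But you then only assert that the invariant ``should absorb'' it. You would also have to re-establish the universal invariant for $G$ at every outer edge, including $uw$ and the other outer edges at $u$ and $w$. That constrains which arc of $G'$ may be designated relative to the in-neighbour of $u$ and the out-neighbour of $w$, and none of this is carried out.

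Three further points are open in this step:
\begin{itemize}
\item The $2$-connectivity of $G-\{u,w\}$ is not argued.
\item ``Fall back to the chord case'' has no meaning here, since $C$ has no chord.
\item $G'$ may again be exceptional.
\end{itemize}

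The paper does not reprove this theorem; it imports it from Ge, Liu and Wang, so there is nothing to compare your route against. As it stands, your key inequality is false in the odd--odd case, and your key invariant is false for small graphs such as the $5$-vertex fan.
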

 The following lemma is a consequence of \cite{GLW2025}. We give the proof here for completeness.
\begin{lemma}\cite{GLW2025}\label{lem:special_orientation}
    Let $G$ be an $n$-vertex $2$-connected  near triangulation with outer cycle $C$. For any vertex $v\in V(C)$, $G$ admits an orientation $D$ such that $\diam(D)=\ordiam(G)\leq \frac{n}{2}+1$ and $\max\{ d_D(v,x), d_D(x, v)\}\leq \frac{n+1}{2}$ for any $x\in V(G)$.
    \end{lemma}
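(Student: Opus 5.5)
The plan is to split into the generic case, handled directly by Theorem~\ref{thm:near_triangulation_diameter}, and the seven exceptional graphs, handled by explicit orientations.

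\textbf{Generic case.} Suppose $G\notin\{K_4^-, K_4, W_5, G_6^1, G_6^2, G_6^3, G_8^1\}$. Theorem~\ref{thm:near_triangulation_diameter} gives $\ordiam(G)\le\lceil n/2\rceil\le \frac{n+1}{2}\le\frac n2+1$. Let $D$ be an optimal orientation of $G$. Then for every $x\in V(G)$, both $d_D(v,x)$ and $d_D(x,v)$ are at most $\diam(D)\le\frac{n+1}{2}$. Both requirements of the lemma hold for every choice of $v$, so nothing further is needed here.

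\textbf{Exceptional graphs.} For the seven graphs I will exhibit orientations by hand. Since the required distances are integers, the condition becomes
\[
\max\{d_D(v,x),d_D(x,v)\}\le\left\lfloor \tfrac{n+1}{2}\right\rfloor .
\]
I also need $\diam(D)=\ordiam(G)\le \frac n2+1$. I will use the values of $\ordiam$ for these graphs from~\cite{GLW2025}; presumably each equals $\lceil n/2\rceil+1$ or less, which is why they are exceptions. For each graph, and for each orbit of outer vertices $v$ under the automorphism group, I will write down an orientation of diameter $\ordiam(G)$ in which $v$ has in- and out-eccentricity at most $\lfloor (n+1)/2\rfloor$. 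The natural strategy is to place $v$ on short directed cycles through its neighbours, so that $v$ behaves like a "hub". The orientation is allowed to depend on $v$.

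Two small cases illustrate the method.

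For $K_4^-$, label the outer $4$-cycle $abcd$ with chord $ac$; here the target bound on eccentricities is $2$.
\begin{itemize}
\item For $v=b$ (degree $2$), orient $a\to b\to c\to a$ and $c\to d\to a$. Then $d_D(b,\cdot)$ takes the values $1,2,2$ on $c,a,d$, and $d_D(\cdot,b)$ takes the values $1,2,2$ on $a,c,d$.
\item For $v=a$ (degree $3$), orient $a\to b\to c\to a$, $a\to d$ and $d\to c$. Then $a$ reaches $b,d$ in one step and $c$ in two, and is reached from $c$ in one step and from $b,d$ in two.
\end{itemize}
In both orientations the diameter is $3=\ordiam(K_4^-)=\frac n2+1$.

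For $K_4$ with outer triangle $vab$ and inner vertex $c$, orient $v\to a\to b\to v$, $v\to c$, $a\to c$ and $c\to b$. Then all distances from and to $v$ are at most $2$, and $\diam(D)=3=\ordiam(K_4)$.

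The remaining graphs $W_5$, $G_6^1, G_6^2, G_6^3$ and $G_8^1$ are treated the same way. For $n=6$ the target is eccentricity at most $3$ with diameter $4$; for $G_8^1$ it is at most $4$ with diameter $5$.

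\textbf{Main obstacle.} The only real work is this finite verification. The delicate point is to achieve the small eccentricity at $v$ and optimal diameter simultaneously, for every outer vertex $v$ of the exceptional graphs. I would first reduce the cases by symmetry, then start from an optimal orientation given in~\cite{GLW2025} and reverse a directed cycle through $v$ where needed.
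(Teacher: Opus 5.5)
Your route is the same as the paper's. The non-exceptional case follows from Theorem~\ref{thm:near_triangulation_diameter} exactly as you say, and the seven exceptional graphs are handled by explicit orientations. Your orientations of $K_4^-$ (for $v=b$ and $v=a$) and of $K_4$ are correct.

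The gap is that for $W_5$, $G_6^1$, $G_6^2$, $G_6^3$ and $G_8^1$ you exhibit nothing, and ``treated the same way'' is exactly what has to be proved. For these graphs the lemma is a finite claim that must be checked: for \emph{every} outer vertex $v$ you need a single orientation of diameter $\frac n2+1$ in which both the in- and out-eccentricity of $v$ are at most $\lfloor\frac{n+1}{2}\rfloor$. That means at most $3$ when $n=6$ and at most $4$ for $G_8^1$. Nothing in your proposal establishes this. Your fallback, reversing a directed cycle through $v$ in an optimal orientation, keeps strong connectivity but gives no control over the diameter or over the eccentricities of $v$, so it is not an argument.

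Relatedly, $\diam(D)=\ordiam(G)$ needs the lower bound $\ordiam(G)\ge\frac n2+1$, which you leave as ``presumably''. It holds once you know, from Ge, Liu and Wang or by a direct check (as is easy for $K_4^-$ and $K_4$), that these graphs violate the bound $\lceil n/2\rceil$. All seven have even order, so $\ordiam(G)\ge\lceil n/2\rceil+1=\frac n2+1$, and any orientation of diameter $\frac n2+1$ is then automatically optimal.

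The paper also uses a reduction that cuts the work. Adding edges never increases directed distances. Moreover, $K_4^-$ is a spanning subgraph of $K_4$, and the fan $G_6^2$ is a spanning subgraph of both $W_5$ and $G_6^3$. Since $K_4^-$ and $G_6^2$ are outerplanar, every vertex of the larger graph is an outer vertex of the smaller one. The oriented diameters also agree ($3$, respectively $4$). So orientations of $K_4^-$ and $G_6^2$ satisfying the lemma for every vertex extend to $K_4$, $W_5$ and $G_6^3$. This makes your separate $K_4$ case unnecessary.

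It therefore suffices to write down orientations of diameter $\frac n2+1$ for $K_4^-$, $G_6^1$, $G_6^2$ and $G_8^1$ only. They must be good at a representative of each automorphism orbit of vertices. $G_6^1$ and $G_8^1$ have a half-turn symmetry, and $G_6^2$ has a reflection fixing its hub. You then need to verify the distances, which is what the paper's figure supplies. Without these explicit orientations and their verification, your proof covers only the non-exceptional graphs, $K_4^-$ and $K_4$.
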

\begin{proof}
    Let $G$ be an $n$-vertex $2$-connected  near triangulation with outer cycle $C$. If $G$ is not a graph in $\{K_4^-, K_4, W_5, G_6^1, G_6^2, G_6^3, G_8^1\}$, the conclusion holds for $G$ by Theorem~\ref{thm:near_triangulation_diameter}. Now we suppose $G\in \{K_4^-, K_4, W_5, G_6^1, G_6^2, G_6^3, G_8^1\}$. Note that  $K_4^-$ is a spanning subgraph of $K_4$ and $G_6^2$ is a spanning subgraph of $W_5$ and $G_6^3$. Hence, it suffices to give explicit optimal orientations for graphs in $\{K_4^-, G_6^1, G_6^2, G_8^1\}$ (see Figure~\ref{fig:exceptions_orientation}). In each orientation in Figure~\ref{fig:exceptions_orientation}, the vertices $v$ for which Lemma~\ref{lem:special_orientation} holds are labeled as red vertices.  This proves Lemma~\ref{lem:special_orientation}.
\end{proof}
\begin{figure}[htbp]
\label{fig:exceptions_orientation}
\centering
\resizebox{12cm}{!}{\begin{tikzpicture}[
    scale=1,
    vertex/.style={circle, draw=black, fill=black, scale=0.5},
    redvertex/.style={circle, draw=red, fill=red, scale=0.5},
    arc/.style={->, >=stealth, line width=0.9pt}
]

% =========================================================
% Top row
% =========================================================

% K4^- (case 1)
\begin{scope}[shift={(0,0)}]
\node at (1,3) {$K_4^{-}$ (case 1)};
\node[redvertex]    (a1) at (0,0) {};
\node[vertex]    (a2) at (0,2) {};
\node[vertex] (a3) at (2,2) {};
\node[vertex]    (a4) at (2,0) {};
\draw[arc] (a1) -- (a2);
\draw[arc] (a2) -- (a3);
\draw[arc] (a3) -- (a4);
\draw[arc] (a4) -- (a1);
\draw[arc] (a2) -- (a4);
\end{scope}

% K4^- (case 2)
\begin{scope}[shift={(5,0)}]
\node at (1,3) {$K_4^{-}$ (case 2)};
\node[vertex]    (b1) at (0,0) {};
\node[vertex]    (b2) at (0,2) {};
\node[vertex]    (b3) at (2,2) {};
\node[redvertex] (b4) at (2,0) {};
\draw[arc] (b1) -- (b2);
\draw[arc] (b3) -- (b2);
\draw[arc] (b4) -- (b3);
\draw[arc] (b4) -- (b1);
\draw[arc] (b2) -- (b4);
\end{scope}

% W5
% \begin{scope}[shift={(11,1)}]
% \node at (0,2) {$W_5$};
% \node[redvertex] (d1) at (90:1.4) {};
% \node[vertex]    (d2) at (18:1.4) {};
% \node[vertex]    (d3) at (-54:1.4) {};
% \node[vertex]    (d4) at (-126:1.4) {};
% \node[vertex]    (d5) at (-198:1.4) {};
% \node[vertex]    (d6) at (0:0) {};
% \draw[arc] (d5) -- (d1);
% \draw[arc] (d2) -- (d1);
% \draw[arc] (d3) -- (d2);
% \draw[arc] (d3) -- (d4);
% \draw[arc] (d4) -- (d5);
% \draw[arc] (d1) -- (d6);
% \draw[arc] (d6) -- (d4);
% \draw[arc] (d6) -- (d3);
% \draw[arc] (d5) -- (d6);
% \draw[arc] (d6) -- (d2);
% \end{scope}

% =========================================================
% Middle row
% =========================================================

% G6^1
\begin{scope}[shift={(11,1)}]
\node at (0,2) {$G_6^{1}$};
\node[vertex]    (e1) at (180:1.4) {};
\node[redvertex] (e2) at (120:1.4) {};
\node[redvertex] (e3) at (60:1.4) {};
\node[redvertex] (e4) at (0:1.4) {};
\node[vertex] (e5) at (-60:1.4) {};
\node[vertex]    (e6) at (-120:1.4) {};
\draw[arc] (e6) -- (e1);
\draw[arc] (e1) -- (e2);
\draw[arc] (e2) -- (e3);
\draw[arc] (e3) -- (e4);
\draw[arc] (e4) -- (e5);
\draw[arc] (e5) -- (e6);
\draw[arc] (e6) -- (e2);
\draw[arc] (e3) -- (e6);
\draw[arc] (e5) -- (e3);
\end{scope}

% G6^2 (case 1)
\begin{scope}[shift={(1.4,-4.0)}]
\node at (0,2) {$G_6^{2}$ (case 1)};
\node[vertex]    (f1) at (180:1.4) {};
\node[vertex]    (f2) at (120:1.4) {};
\node[redvertex] (f3) at (60:1.4) {};
\node[redvertex] (f4) at (0:1.4) {};
\node[vertex]    (f5) at (-60:1.4) {};
\node[vertex]    (f6) at (-120:1.4) {};
\draw[arc] (f6) -- (f1);
\draw[arc] (f1) -- (f2);
\draw[arc] (f2) -- (f3);
\draw[arc] (f3) -- (f4);
\draw[arc] (f4) -- (f5);
\draw[arc] (f5) -- (f6);
\draw[arc] (f6) -- (f2);
\draw[arc] (f6) -- (f3);
\draw[arc] (f4) -- (f6);
\end{scope}

% G6^2 (case 2)
\begin{scope}[shift={(6.2,-4.0)}]
\node at (0,2) {$G_6^{2}$ (case 2)};
\node[redvertex]    (g1) at (180:1.4) {};
\node[vertex] (g2) at (120:1.4) {};
\node[vertex] (g3) at (60:1.4) {};
\node[redvertex]    (g4) at (0:1.4) {};
\node[vertex] (g5) at (-60:1.4) {};
\node[redvertex]    (g6) at (-120:1.4) {};
\draw[arc] (g6) -- (g1);
\draw[arc] (g1) -- (g2);
\draw[arc] (g2) -- (g3);
\draw[arc] (g3) -- (g4);
\draw[arc] (g4) -- (g5);
\draw[arc] (g5) -- (g6);
\draw[arc] (g2) -- (g6);
\draw[arc] (g6) -- (g3);
\draw[arc] (g4) -- (g6);
\end{scope}

% G6^3 (case 1)
% \begin{scope}[shift={(11,-3.75)}]
% \node at (0,1.75) {$G_6^{3}$ (case 1)};

% \node [vertex] (h1) at (150:1.4) {};
% \node [vertex] (h2) at (0,0) {};
% \node [redvertex] (h3) at (30:1.4) {};
% \node [redvertex] (h4) at (2.1,-0.35) {};
% \node [redvertex] (h5) at (1.21,-1.4) {};
% \node [redvertex] (h6) at (-90:1.4) {};

% \draw[arc] (h2) -- (h1);
% \draw[arc] (h2) -- (h3);
% \draw[arc] (h3) -- (h4);
% \draw[arc] (h4) -- (h5);
% \draw[arc] (h5) -- (h6);
% \draw[arc] (h6) -- (h1);
% \draw[arc] (h6) -- (h2);
% \draw[arc] (h6) -- (h3);
% \draw[arc](h3) -- (h5);
% \draw[arc] (h1) -- (h3);
% \end{scope}

% G6^3 (case 2)
% \begin{scope}[shift={(16,-3.75)}]
% \node at (0,1.75) {$G_6^{3}$ (case 2)};

% \node [redvertex] (i1) at (150:1.4) {};
% \node [vertex] (i2) at (0,0) {};
% \node [vertex] (i3) at (30:1.4) {};
% \node [vertex] (i4) at (2.1,-0.35) {};
% \node [vertex] (i5) at (1.21,-1.4) {};
% \node [vertex] (i6) at (-90:1.4) {};

% \draw[arc] (i2) -- (i1);
% \draw[arc] (i2) -- (i3);
% \draw[arc] (i3) -- (i4);
% \draw[arc] (i4) -- (i5);
% \draw[arc] (i5) -- (i6);
% \draw[arc] (i6) -- (i1);
% \draw[arc] (i6) -- (i2);
% \draw[arc] (i3) -- (i6);
% \draw[arc] (i3) -- (i5);
% \draw[arc] (i1) -- (i3);
% \end{scope}
% =========================================================
% Bottom row
% =========================================================

% G8^1
\begin{scope}[shift={(10.5,-3.9)}]
\node at (0.5,1.9) {$G_8^{1}$};
\node [redvertex] (v1) at (-1,1) {};
\node [redvertex] (v2) at (0, 1) {};
\node [redvertex] (v3) at (1,1) {};
\node [redvertex] (v4) at (2, 1) {};
\node [vertex] (v5) at (2,-1) {};
\node [vertex] (v6) at (1,-1) {};
\node [redvertex] (v7) at (0,-1) {};
\node [vertex] (v8) at (-1,-1) {};

\draw[arc] (v1) -- (v2);
\draw[arc] (v2) -- (v3);
\draw[arc] (v3) -- (v4);
\draw[arc] (v4) -- (v5);
\draw[arc] (v5) -- (v6);
\draw[arc] (v6) -- (v7);
\draw[arc] (v7) -- (v8);
\draw[arc] (v8) -- (v1);
\draw[arc] (v7) -- (v2);
\draw[arc] (v3) -- (v6);
\draw[arc] (v7) -- (v1);
\draw[arc] (v3) -- (v7);
\draw[arc] (v5) -- (v3);
\end{scope}

\end{tikzpicture}}
\caption{Explicit optimal orientations for Lemma~\ref{lem:special_orientation}}
\end{figure}
Next we show Theorem~\ref{thm:ordiam_vs_diam}, which we restate for convenience.\diambound*
\begin{proof}
Let $G$ be a plane triangulation such that $G\neq K_4$, and let $G^*$ be the plane dual of $G$. It follows that $G^*$ is connected and $G^*$ is cubic as $G$ is a plane triangulation. Note that $G^*\neq K_4$ since $G\neq K_4$. Then Brooks' theorem gives a proper vertex $3$-coloring $\varphi: V(G^*)\to \{0,1,2\}.$ Equivalently, this is a proper $3$-coloring of the faces of $G$. 

We orient each edge of $G$ by the coloring $\varphi: F(G)\to \{0,1,2\}$. For each edge $e=xy$ of $G$, $e$ is contained in exactly two faces of $G$, say $f_e$ and $g_e$. Since $\varphi$ is proper, $\varphi(f_e)\neq \varphi(g_e)$. Orient every edge $e$ of $G$ so that its lower-colored incident face lies on its left, and we call the resulting orientation $D$. Note that the boundary of a face with color $0$ corresponds to a counterclockwise directed triangle in $D$ and the boundary of a face with color $2$ corresponds to a clockwise directed triangle in $D$. Since $\varphi$ is a proper $3$-coloring, we have that each edge of $G$ is incident with a face of color $0$ or $2$. This implies that every arc of $D$ is contained in a directed triangle.

For every $u,v\in V(G)$, let $P$ be a shortest $uv$-path of $G$ denoted by $P=x_0 x_1\ldots x_{\ell}$, where $x_0=u$ and $x_{\ell}=v$. We now find a directed $uv$-walk in $D$. If $x_ix_{i+1}$ is oriented $x_i \to x_{i+1}$, we use that arc. Otherwise, we have that there exists $z_i\in V(D)$ such that $x_i\to z_i\to x_{i+1}$ is a directed path of length two in $D$ as each arc of $D$ is contained in a directed triangle. Then $P$ gives a directed $uv$-walk of length at most $2\ell$. Hence, $d_D(u,v)\leq 2\ell =2d_G(u,v)$. Therefore, $\ordiam(G)\leq \diam(D)\leq 2\diam (G)$.
\end{proof}
Using the face coloring idea, we have the following lemma for near triangulations, which we  use to prove Conjecture~\ref{conj:main} for the $4$-connected case.
\begin{lemma}\label{lem:near_trian_inteorior}
 Let $G$ be be a near triangulation with outer cycle $C$. Then $G-E(C)$ admits an orientation $D$, such that 
 \begin{itemize}
 \item [(i)] for any $u_1,u_2\in V(G)\setminus V(C)$ with $u_1, u_2$ contained in a same component of $G-C$, we have that $d_D(u_1,u_2)\leq 2d_{G-C}(u_1, u_2)$, and 
\item [(ii)] for any $v\in V(G)\setminus V(C)$, we have that $\max\{d_D(v, V(C)), d_D(V(C), v)\}\leq 2 d_G(v, V(C)).$ 
\end{itemize}
\end{lemma}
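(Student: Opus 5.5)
The plan is to adapt the face-colouring argument from the proof of Theorem~\ref{thm:ordiam_vs_diam}, colouring only the inner faces. Every edge of $G-E(C)$ lies on exactly two faces of $G$, and both are inner faces, since only edges of $C$ touch the outer face. So I would form the auxiliary graph $H$ whose vertices are the inner faces of $G$, with two inner faces adjacent whenever they share an edge of $G-E(C)$.

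The first step is to $3$-colour $H$. Since every inner face is a triangle, $H$ has maximum degree at most $3$. I would argue that no component of $H$ is $K_4$. Such a component would consist of four triangular faces, each sharing all three of its edges with the other three. Their union would then be a closed surface, i.e.\ the whole sphere, leaving no room for the outer face; this is impossible. Components of maximum degree at most $2$ are trivially $3$-colourable, and the remaining components are $3$-colourable by Brooks' theorem. This gives a proper colouring $\varphi$ of the inner faces with colours $\{0,1,2\}$.

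Next I would define $D$ exactly as before. Each edge $e\notin E(C)$ is oriented so that its lower-coloured incident face lies on its left. Faces of colour $0$ become counterclockwise directed triangles and faces of colour $2$ clockwise ones, with the caveat that such a triangle may contain edges of $C$, which are not in $D$. Each $e\notin E(C)$ lies on a face of colour $0$ or $2$, because its two faces receive distinct colours. Hence if $e=xy$ is oriented $y\to x$, there is a vertex $z$ on that face whose edges $xz$ and $zy$ are oriented $x\to z\to y$ in the triangle orientation. These edges belong to $D$ provided they are not edges of $C$.

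The main care is exactly this issue of edges of $C$, and it is resolved by choosing paths appropriately.

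For (i), take a shortest $u_1u_2$-path in $G-C$. Each of its edges $x_ix_{i+1}$ has both ends off $C$. Therefore the detour edges $x_iz$ and $zx_{i+1}$ are each incident with a vertex not on $C$, so neither lies in $E(C)$. Replacing each backward edge by its length-$2$ detour gives a directed walk of length at most $2d_{G-C}(u_1,u_2)$.

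For (ii), take a shortest path $x_0=v,\dots,x_\ell$ from $v$ to $V(C)$, so that only $x_\ell$ lies on $C$. Every edge $x_ix_{i+1}$ has $x_i\notin V(C)$, so the edge itself is in $D$, and so is the first detour edge $x_iz$. The only possible problem is the second detour edge $zx_{i+1}$ when $i=\ell-1$. If $zx_\ell\in E(C)$, then $z\in V(C)$ already, and we simply stop the walk at $z$. This gives $d_D(v,V(C))\le 2\ell$. The symmetric argument, using detours $z\to x_{i+1}$ preceded by $x_i\to z$ and applied to the reversed path starting from the $C$-end, gives the same bound for $d_D(V(C),v)$.
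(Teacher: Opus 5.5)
Your proposal is correct and follows essentially the same route as the paper: properly $3$-colour the weak dual by Brooks' theorem, orient each edge of $G-E(C)$ with its lower-coloured face on the left, and replace each backward edge of a shortest path by a length-$2$ detour through an incident face of colour $0$ or $2$, checking which detour edges can lie on $C$. The differences are cosmetic. You rule out a $K_4$ component by a topological argument rather than by connectivity of the weak dual. In (ii) you treat the last edge uniformly by stopping (or starting) at $z\in V(C)$, whereas the paper splits into the cases $k=1$ and $k\ge 2$ and reduces the latter to (i).
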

\begin{proof}
  Let $G$ be a near triangulation with outer cycle $C$. Let $G^*_1$ be the weak dual of $G$ (the subgraph of the plane dual of $G$ induced on all faces except the outer face). Note that $G^*_1$ is connected as $G$ is $2$-connected. Since $G_1^*$ is not $K_4$ and has maximum degree at most three, Brooks' theorem gives a proper vertex $3$-coloring $\varphi_1: V(G_1^*)\to \{0,1,2\}$. Note that every edge in $G-E(C)$ is contained in two facial triangles bounding the interior faces of $G$, and so it is incident with a face of color $0$ or $2$. Hence, we use the same way as the proof of Theorem~\ref{thm:ordiam_vs_diam} to orient all the edges in $G-E(C)$, and then we let $D$ denote this orientation. Observe that for each edge $e\in E(G-C)$, all the edges in the two facial triangles containing $e$ are contained in $G-E(C)$. Similarly, we have each arc in $D-V(C)$ (from the edges in $G-C$) is contained in a directed triangle.

  Suppose $u_1, u_2\in V(G)\setminus V(C)$ are contained in the same component of $G-C$. Assume $P$ is a shortest $u_1u_2$-path in $G-C$. Since all edges of $P$ are contained in $G-C$, we know every arc from $E(P)$ is contained in a directed triangle of $D$.  Then it follows that $d_D(u_1, u_2)\leq 2d_{G-C}(u_1, u_2)$ as $P$ produces a $u_1u_2$-walk of length at most $2d_{G-C}(u_1, u_2)$ in $D$. This proves (i).

Next we show (ii). Let $v\in V(G)\setminus V(C)$ and let $Q=x_0 x_1 \ldots x_k$ for some integer $k$, where $x_0=v$ and $x_k\in V(C)$, denote a shortest path between $v$ and $V(C)$. Hence, $d_{G}(v, V(C))=k $ and we have that $x_i\in V(G)\setminus V(C)$ for $i\in [k-1]$ if $k\geq 2$.  Suppose $k=1$, i.e., $vx_1\in E(G)$ for some $x_1\in V(C)$. We know that the edge $vx_1$ is incident with a triangular face of color $0$ or $2$ in $\varphi_1$. Without loss of generality, we may assume $vx_1$ is incident with a triangular face of color $0$, and let $T=vx_1wv$ denote its boundary cycle. If all edges of $T$ are contained in $G-E(C)$, we know $v\to w \to x_1 \to v$ is a directed triangle of $D$. Hence, $d_D(v,x_1)\leq 2$ and $d_D(x_1, v)=1$. This implies that $\max\{d_D(v, V(C)), d_D(V(C), v)\}\leq 2 =2 d_{G}(v, V(C))$. We may now assume that $E(T)$ is not contained in $G-E(C)$. Then we have $x_1w\in E(C)$ as $v\in V(G)\setminus V(C)$. Hence, $w\in V(C)$ and we know that the edges $x_1v, vw$ are oriented as $x_1\to v, v\to w$ since the face with boundary $T=vx_1wv$ is colored with $0$ in $\varphi_1$. Hence, it follows that $d_D(v, V(C))= d_D(V(C), v)=1<2= 2d_{G}(v, V(C))$. Now we suppose $k\geq 2$. Observe that the distance between $x_{k-1}$ and $V(C)$ in $G$ is one. Thus, we have $\max \{d_D(x_{k-1}, V(C)), d_D(V(C), x_{k-1})\}\leq 2$. Note that $x_0Qx_{k-1}=vx_1\ldots x_{k-1}$ is a path of length $k-1$ in $G-C$, and it follows from (i) that $\max \{d_D(v, x_{k-1}), d_D(x_{k-1}, v)\}\leq 2(k-1)$. Therefore, 
\begin{align*}
d_D(v, V(C))\leq d_D(v, x_{k-1})+d_D(x_{k-1}, V(C))\leq 2(k-1)+2=2k=2d_G(v, V(C)).
\end{align*}
 Similarly, we have that $d_D(V(C), v)\leq2d_G(v, V(C))$, which completes the proof. 
\end{proof}
We also need the following useful observations and we leave the proofs to readers.
\begin{observation}
    \label{obs:goodH}
    Let $G$ be a graph and $H$ be a connected bridgeless spanning subgraph of $G$. Then \[\ordiam(G)\leq \ordiam(H).\]
\end{observation}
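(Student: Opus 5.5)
The plan is to take an optimal orientation of $H$ and extend it to all of $G$, then check that no distance goes up. Let $D_H$ be a strong orientation of $H$ with $\diam(D_H)=\ordiam(H)$. Such an orientation exists by Robbins' theorem, since $H$ is connected and bridgeless. Next, orient every edge of $E(G)\setminus E(H)$ arbitrarily, and call the resulting orientation of $G$ by $D$.

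Because $H$ is a spanning subgraph, $V(D)=V(D_H)$, and $D_H$ is a spanning subdigraph of $D$. Hence every directed path in $D_H$ is also a directed path in $D$. This gives $d_D(u,v)\le d_{D_H}(u,v)$ for all $u,v\in V(G)$. In particular $D$ is strongly connected, so it is a strong orientation of $G$. Taking the maximum over all pairs, $\diam(D)\le \diam(D_H)=\ordiam(H)$.

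Finally, $\ordiam(G)\le \diam(D)$ by the definition of $\ordiam(G)$, which proves the observation. The argument uses that $G$ is connected and bridgeless only in the sense that $\ordiam(G)$ must be defined. That follows from $H$: a spanning connected bridgeless subgraph forces $G$ to be connected, and every edge of $G$ lies on a cycle (edges of $H$ lie on cycles in $H$, and an edge $xy\notin E(H)$ closes a cycle with an $xy$-path in $H$). No step here is a real obstacle; the only point needing care is to state explicitly that the extension $D$ is strong, so that it is admissible in the definition of $\ordiam(G)$.
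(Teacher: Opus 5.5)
Your proof is correct: extending an optimal orientation of $H$ by orienting the extra edges arbitrarily keeps every directed path of $D_H$, so no distance increases and $D$ is a strong orientation of $G$. The paper gives no proof (it leaves the observation to the reader), and yours is the standard argument it intends; your extra check that $G$ is connected and bridgeless, so that $\ordiam(G)$ is defined, is a welcome detail.
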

\begin{observation}
    \label{obs:goodH}
    Let $G$ be a graph and $H$ be a connected bridgeless subgraph of $G$. Suppose every vertex of $V(G)\setminus V(H)$ has at least two neighbors in $H$. Then \[\ordiam(G)\leq \ordiam(H)+2.\]
\end{observation}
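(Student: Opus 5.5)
The plan is to start from an optimal orientation of $H$ and orient the edges at each vertex outside $H$ so that it becomes a short detour through $H$. Fix a strong orientation $D_H$ of $H$ with $\diam(D_H)=\ordiam(H)$. For each vertex $w\in V(G)\setminus V(H)$, choose two distinct neighbours $a_w,b_w\in V(H)$ and orient $a_w\to w\to b_w$. Every remaining edge of $G$ can be oriented arbitrarily. This includes edges between two vertices outside $H$, extra edges from $w$ to $H$, and edges of $G[V(H)]$ not in $H$. Call the resulting orientation $D$.

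The first step is to check that $D$ is strong. Any vertex $w\notin V(H)$ reaches $b_w$ in one step and is reached from $a_w$ in one step. Since $D_H$ is strong, every vertex can reach every other one.

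The second step bounds distances, using that adding arcs never increases distances.
\begin{itemize}
\item For $x,y\in V(H)$ we have $d_D(x,y)\le d_{D_H}(x,y)\le \ordiam(H)$.
\item For $w\notin V(H)$ and $y\in V(H)$ we get $d_D(w,y)\le 1+\ordiam(H)$, and symmetrically $d_D(x,w)\le \ordiam(H)+1$.
\item For $w,w'\notin V(H)$ the route $w\to b_w\leadsto a_{w'}\to w'$ gives $d_D(w,w')\le \ordiam(H)+2$.
\end{itemize}
If $w=w'$ there is nothing to prove. Hence $\diam(D)\le \ordiam(H)+2$, and so $\ordiam(G)\le\ordiam(H)+2$.

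There is no real obstacle. The only points to note are that the two neighbours must be distinct, so that both an in-arc and an out-arc exist, and that the hypothesis ``connected bridgeless'' on $H$ guarantees $\ordiam(H)$ is finite by Robbins' theorem. The first Observation~\ref{obs:goodH} is the special case $V(H)=V(G)$ of the same monotonicity argument.
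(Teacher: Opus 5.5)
Your proof is correct. You take an optimal orientation of $H$, orient $a_w\to w\to b_w$ through two distinct neighbours in $H$ for each $w\notin V(H)$, and orient all remaining edges arbitrarily; this gives a strong orientation of $G$ in which every distance is at most $\ordiam(H)+2$. The paper gives no proof of this observation and leaves it to the reader, and your argument is the natural one intended; the paper uses the same idea of short in- and out-routes into $G_0$ in its proof of the $4$-connected theorem.
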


\section{Planar triangulations with long diameter}\label{sec:long_diameter}
We first give some definitions. Let $G$ be a graph and $H\subseteq G$. An {\it $H$-bridge} of $G$ is a subgraph of $G$ induced by either an edge in $E(G)\setminus E(H)$ with both incident vertices in $V(H)$, or all edges in $G-E(H)$ with at least one incident vertex in a single component of $G-H$. For an  $H$-bridge $B$ of $G$, the vertices in $V(B\cap H)$ are the {\it attachments} of $B$ on $H$.

\longdiam*
\begin{proof}
 Suppose $G$ is a plane triangulation on $n$ vertices. We want to show $\ordiam(G)\leq \frac{n-\diam(G)}{2}+11$.  Note that $\ordiam(K_4)=3$, and so we may assume that $G\neq K_4$. Since $G$ is also a $2$-connected near triangulation, Theorem~\ref{thm:near_triangulation_diameter} implies that $\ordiam(G)\leq \frac{n+1}{2}$. Thus, if $\diam(G)\leq 21$ then $\ordiam(G)\leq \frac{n+1}{2}\leq \frac{n-\diam(G)}{2}+11$. Hence, we may assume that $\diam(G)\geq 22$.
 Let $u,v\in V(G)$ such that $d_G(u,v)=\diam(G)$. Then every $uv$-path in $G$ has at least $\diam(G)+1\geq 23$ vertices. Note that $G$ is $3$-connected as $G$ is a plane triangulation on at least four vertices. By Menger's theorem, there exist three internally vertex-disjoint $uv$-paths in $G$, say $P_0, P_1, P_2$. 
We may assume that all internal vertices of $P_0$ are contained in the interior of the cycle $C_0:=P_1\cup P_2$, and may assume $P_0$ is a shortest $uv$-path in $\overline{\Int(C_0)}-(V(C_0)\setminus \{u,v\})$. Let $G_0$ be the maximal $2$-connected graph containing $\overline{\Ext(C_0)}$ in $G-(V(P_0)\setminus \{u,v\})$. Note that $G_0$ is a $2$-connected near triangulation and $u,v$ are contained in its outer cycle. This implies that the outer cycle of $G_0$ contains two internally vertex-disjoint $uv$-paths, each of which is internally vertex-disjoint from $P_0$. Hence, we may assume $G_0=\overline{\Ext(C_0)}$ is a near triangulation with outer cycle $C_0=P_1\cup P_2$. We find a subgraph $H\subseteq G-E(P_0)$ such that $H$ contains $G-(V(P_0)\setminus \{u,v\})$ as a subgraph and $$\ordiam(H)\leq \frac{n+1-|V(P_0)|}{2}+9.$$ 
We first claim that the existence of $H$ implies that $\ordiam(G)\leq \frac{n-\diam(G)}{2}+11$.
Observe that every vertex in $P_0-\{u,v\}$ has at least two neighbors in $G-(V(P_0)\setminus \{u,v\})$ as $P_0$ is a shortest $uv$-path in $G-(V(G_0)\setminus \{u,v\})$, and every vertex in $V(G)\setminus V(H)$ is contained in $P_0-\{u,v\}$.  This implies that every vertex in $V(G)\setminus V(H)$ has at least two neighbors in $H$. It follows from Observation~\ref{obs:goodH} that $$\ordiam(G)\leq \ordiam(H)+2\leq \frac{n+1-|V(P_0)|}{2}+9+2\leq \frac{n-\diam(G)}{2}+11.$$

\begin{figure}[htbp]
\label{fig:bridge}
\centering
\resizebox{12cm}{!}{\begin{tikzpicture}[
    x=0.9cm,
    y=0.9cm,
    line/.style={draw=black, line width=0.8pt},
    vertex/.style={circle, fill=black, inner sep=2.2pt},
    every node/.style={font=\large}
]

% Main vertices on P_0
\coordinate (u)   at (0,0);
\coordinate (a1)  at (1.35,0);
\coordinate (a2)  at (2.65,0);
\coordinate (xj)  at (4.35,0);
\coordinate (xs)  at (5.45,0);
\coordinate (xt)  at (6.55,0);
\coordinate (x1)  at (7.2,0);
\coordinate (x2)  at (8.2,0);
\coordinate (xk)  at (9.75,0);
\coordinate (b0) at (10.2,0);
\coordinate (b1)  at (11,0);
\coordinate (b2)  at (12.15,0);
\coordinate (v)   at (14,0);

\node[vertex] at (6,0) {};
% Vertices below P_0
\coordinate (ell1) at (1.10,-2.25);
\coordinate (ell2) at (2.00,-3.00);
\coordinate (z1)   at (6.25,-1.35);
\coordinate (z2)   at (7.5, -1.35);
\coordinate (y1)   at (5.85,-4.35);
\coordinate (r1)   at (11.5,-2.85);

% Upper u-v path P_2
\draw[line]
    (u)
    .. controls (3.1,4.25) and (10.65,4.25) ..
    node[pos=.80, above=2pt] {$P_2$}
    (v);

% Horizontal path P_0, with two omitted portions
\draw[line]
    (u)--(a1)--(a2)--(2.95,0);

\node[
    fill=white,
    inner xsep=3pt,
    inner ysep=0pt
] at (3.48,0) {$\cdots$};

\draw[line]
    (3.98,0)--(xj)--(xs)--(xt)--(8.5,0);

\node[
    fill=white,
    inner xsep=3pt,
    inner ysep=0pt
] at (9,0) {$\cdots$};

\draw[line]
    (9.5,0)--(xk)--(b0)--(b1)--(b2)--(12.75,0);

\draw[line]
(13.25,0)--(v);
\node[
    fill=white,
    inner xsep=3pt,
    inner ysep=0pt
] at (13,0) {$\cdots$};
% Lower u-v path P_1
\draw[line]
    (u)
    .. controls (0.2,-1) and (0.7,-1.5) ..
    (ell1)--(ell2)
    .. controls (3.15,-3.85) and (4.65,-4.30) ..
    (y1)
    .. controls (7.85,-4.45) and (9.85,-3.75) ..
    (r1)
    .. controls (12.5,-2) and (13.5,-1) ..
    (v);

% Left triangulated part
\draw[line] (a1)--(ell1);
\draw[line] (a2)--(ell1);
\draw[line] (a2)--(ell2);
\draw[line] (xj)--(ell2);

% Central part
\draw[line] (xj)--(y1);
\draw[line] (xk)--(y1);

% z_1 is adjacent to x_{s_1} and x_{t_1}
\draw[line] (xs)--(z1)--(xt);

\draw[line] (x1)--(z2)--(x2);
% Curved edge between x_{s_1} and x_{t_1}, below P_0
\draw[line]
    (xs)
    to[out=-58,in=-122,looseness=1.35]
    (xt);
\draw[line]
    (a1)
    to[out=-58,in=-122,looseness=1.35]
    (a2);
\draw[line]
    (a2)
    to[out=-58,in=-122,looseness=1.35]
    (xj);
    \draw[line]
    (b0)
    to[out=-58,in=-122,looseness=1.35]
    (b1);
    \draw[line]
    (b1)
    to[out=-58,in=-122,looseness=1.35]
    (b2);
\draw[line] (x1)--(z2)--(x2);
\draw[line]
    (x1)
    to[out=-58,in=-122,looseness=1.35]
    (x2);
% Curved edge between z_1 and x_{k_1}
\draw[line]
    (z2)
    .. controls (8,-1.4) and (8.75,-1.00) ..
    (xk);
\draw[line]
    (z1)
    .. controls (7,-1.5) and (8,-1.4) ..
    (z2);
\draw[line]
    (xj)
    .. controls (4.9,-1.2) and (5.65,-1.10) ..
    (z1);
% Right triangulated part
\draw[line] (b0)--(r1);
\draw[line] (b1)--(r1);
\draw[line] (b2)--(r1);

% Draw all vertices last
\foreach \p in {
    u,a1,a2,xj,xs,xt,xk,b0,b1,b2,v,
    ell1,ell2,z1,z2,y1,r1,x1,x2}{
    \node[vertex] at (\p) {};
}

% Vertex labels
\node[left=5pt]  at (u) {$u=x_0$};
\node[right=5pt] at (v) {$v=x_{|V(P_0)|-1}$};

\node[above=5pt] at (xj) {$x_{j_1}$};
\node[above=5pt] at (xs) {$x_{s_1}$};
\node[above=5pt] at (xt) {$x_{t_1}$};
\node[above=5pt] at (xk) {$x_{k_1}$};

\node[below=5pt] at (z1) {$z_1$};
\node[below=5pt] at (y1) {$y_1$};

% Path and region labels
\node at (12.5,0.55) {$P_0$};
\node at (12.5,-2.5) {$P_1$};
\node at (7.5,-4.8) {$G_0$};

\end{tikzpicture}}
\caption{The paths $P_0, P_1, P_2$, and $G_0=\overline{\Ext(P_1\cup P_2)}$, and some $H_0$-bridges}
\end{figure}
Now we find such $H$. Let $H_0=G_0\cup P_0$. Consider all $H_0$-bridges of $G$. Suppose $B$ is an $H_0$-bridge of $G$. If $B$ is an edge, we know that this edge has one end in $P_0-\{u,v\}$ and one end in $P_1-\{u,v\}$ or $P_2-\{u,v\}$ by the choices of $P_0$ and $G_0$. If $B$ is not an edge, then $B$ has at least three attachments; moreover, $B$ has exactly one attachment on $G_0$ by the maximality of $G_0$.  Observe that $G_0$ has at least $|V(P_1)|+|V(P_2)|-2\geq 2(\diam(G)+1)-2=2\diam(G)\geq 44$ vertices. Then it follows from Theorem~\ref{thm:near_triangulation_diameter} that $\ordiam(G_0)\leq {\frac{|V(G_0)|+1}{2}}$, and let $O_0$ be an optimal orientation of $G_0$. Hence, $\diam(O_0)=\ordiam(G_0)\leq {\frac{|V(G_0)|+1}{2}}$. Suppose all $H_0$-bridges are edges. Then $G-(V(P_0)\setminus \{u,v\})=G_0$, and we let $H=G_0$.  Note that $|V(H)|=|V(G_0)|=n-(|V(P_0)|-2)= n+2-|V(P_0)|$. It follows that $$\ordiam(H)=\ordiam(G_0)\leq{\frac{|V(G_0)|+1}{2}}= \frac{n+3-|V(P_0)|}{2}< \frac{n+1-|V(P_0)|}{2}+9.$$ Thus, we may now assume there exists an $H_0$-bridge that is not an edge.

\medskip

Let $B_1, \ldots, B_\ell$ for some integer $\ell\geq 1$ be all $H_0$-bridges of $G$ such that $B_i$ is not an edge for each $i\in [\ell]$. Hence $V(B_i)\setminus V(H_0)\neq \emptyset$. Recall that $B_i$ has a unique attachment on $C_0$, say $y_i$. Note that $y_i\in V(P_1)\setminus \{u,v\}$ or $y_i\in V(P_2)\setminus \{u,v\}$. Consider $B_1$. Without loss of generality, we may assume that $y_1$, the attachment of $B_1$ on $C_0$, is contained in  $V(P_1)\setminus \{u,v\}$. Let $P_0=x_0x_1\ldots x_{|V(P_0)|-1}$ such that $x_0=u, x_{|V(P_0)|-1}=v$. Let the integers $j_1, k_1$, respectively, with $1\leq j_1< k_1\leq |V(P_0)|-2$ denote the minimum index and the maximum index of the attachments of $B_1$ on $P_0$, respectively. Since $G$ is a plane triangulation, we have that $y_1$ is adjacent to $x_{j_1}$ and $x_{k_1}$. Observe that $y_1x_{j_1}, y_1x_{k_1}$ are not edges of $B_1$. We define a subgraph $G_1\subseteq B_1\cup x_{j_1}y_1x_{k_1}$ such that $G_1$ contains all vertices of $V(B_1)\setminus V(H_0)$. Note that $B_1$ contains no edge in $E(P_0) \cup \{y_1x_{j_1}, y_1x_{k_1}\}$ and $B_1$ has an $x_{j_1}x_{k_1}$-path, say $Q_1$, that does not contain the vertex $y_1$. Note that $Q_1$ has length at least the length of $x_{j_1}P_0x_{k_1}$.

Observe that all internal vertices of $x_{j_1}P_0 x_{k_1}$ are contained in the exterior of the cycle $C_1:=Q_1\cup x_{j_1}y_1 x_{k_1}$, and we may choose $Q_1\subseteq B_1$ such that $\overline {C_1}$ has the maximum possible number of vertices and edges. Suppose $\overline{C_1}$ contains all vertices in $V(B_1)\setminus V(H_0)$. The let $G_1:=\overline{C_1}$. We may now assume $(V(B_1)\setminus V(H_0))\setminus V(\overline{C_1})\neq \emptyset$. Let $C_1'=Q_1\cup x_{j_1} P_0 x_{k_1}$. Then $\overline{C_1'}$ is not outerplanar. Let $F_1, \ldots, F_{i_1}$ with some integer $i_1\geq 1$ be all $C_1'$-bridges of $\overline{C_1'}$ that are not a single edge.
By the choices of $Q_1$ and $P_0$, we know that $V(F_i)\cap \{x_{j_1}, x_{k_1}\}=\emptyset$, $F_i$ has exactly one attachment on $Q_1-\{x_{j_1}, x_{k_1}\}$, say $z_i$, and at least two attachments on $x_{j_1}P_0x_{k_1}-\{x_{j_1}, x_{k_1}\}$ for each $i\in [i_1]$. Observe that $z_i\in V(B_1)\setminus V(H_0)$.
Let the integers $s_i, t_i$ with $j_1+1\le s_i<t_i\le k_1-1$  denote the minimum index and the maximum index of the attachments of $F_i$ on $x_{j_1}P_0x_{k_1}$, respectively. Then $z_i$ is adjacent to $x_{s_i}$ and $x_{t_i}$, and by the choice of $P_0$, we have that $1\le t_i-s_i\le 2$.
By planarity, we may assume that $j_1+1\le s_1<t_1\le s_2< t_2\leq \ldots \leq s_{i_1}<t_{i_1}\le k_1-1$. Hence, we have that $k_1-j_1\geq i_1+2$ for $i_1\geq 1$. Consider $F_1$. Recall that $F_1$ has an attachment $z_1$ on $Q_1-\{x_{j_1}, x_{k_1}\}$, at least two attachments $x_{s_1}, x_{s_1+1},\ldots , x_{t_1}$ on $P_0$, and $z_1x_{s_1}, z_1x_{t_1}\in E(G)$. Observe that
$F_1$ has no edge in $\{z_1x_{s_1}, z_1x_{t_1}\}\cup E(C_1')$ with $E(C_1')\subseteq E(P_0)\cup E(Q_1)$.  Let $R_1$ be an $x_{s_1}x_{t_1}$-path of $F_1$ that does not contain the vertex $z_1$, and we may choose $R_1$ such that for the cycle $D_1=R_1\cup x_{s_1}z_1x_{t_1}$, the graph $\overline{D_1}$ has maximum possible number of vertices and edges. Since $1\le t_1-s_1\leq 2$, we obtain that $\overline{D_1}$ contains all vertices in $V(F_1)\setminus V(C_1')$. Define $D_2, \ldots, D_{i_1}$ by $F_2, \ldots, F_{i_1}$, respectively, in this way. Then we let $G_1:=\overline{C_1}\cup \overline {D_1}\cup \cdots \cup \overline{D_{i_1}}$. See Figure~\ref{fig:bridge} for the illustration of $G_1$. We repeat this process for each $B_i$ with $2\le i \le \ell$ if $\ell\geq 2$ and obtain $G_2, \ldots, G_{\ell}$. Let $H:=G_0\cup G_1\cup \cdots \cup G_\ell$.

\medskip
We find an orientation of $H$. Let $n_i:= |V(G_i)\setminus V(H_0)|=|V(B_i)\setminus V(H_0)|$ for each $i\in [\ell]$. We first obtain an orientation of $G_i$, say $O_i$, such that $\diam(O_i)\leq \frac{n_i}{2}+6$ and $\max \{d_{O_i}(y_i,x),d_{O_i}(x, y_i)\}\leq \frac{n_i}{2}+4$ for any $x\in V(G_i)$. It suffices to show that $G_1$ has such an orientation $O_1$. Observe that $\overline{C_1}$ is a $2$-connected near triangulation and $V(\overline{C_1})\cap V(H_0)=\{y_1, x_{j_1}, x_{k_1}\}$. It follows from Lemma~\ref{lem:special_orientation} that $\overline{C_1}$ has an orientation $M_1$ such that $\diam(M_1)=\ordiam(\overline{C_1})\leq \frac{|V(\overline{C_1})|}{2}+1\leq \frac{n_1+3}{2}+1=\frac{n_1}{2}+\frac{5}{2}$ and $\max \{d_{M_1}(y_1,x), d_{M_1}(x, y_1)\}\leq \frac{|V(\overline{C_1})|+1}{2}\leq \frac{n_1+3+1}{2}=\frac{n_1}{2}+2$ for any $x\in V(\overline{C_1})$.
Suppose $G_1=\overline{C_1}$. We let $O_1=M_1$, which is the desired orientation. We may now assume that $G_1=\overline{C_1}\cup \overline {D_1}\cup \cdots \cup \overline{D_{i_1}}$, where $i_1\geq 1$.  
Observe that $\overline{D_i}$ is a $2$-connected near triangulation on at least four vertices, $V(\overline{D_i})\cap V(H_0)=V(\overline{D_i})\cap V(P_0)=\{x_{s_i}, x_{t_i}\}$ and $V(\overline{D_i})\cap V(\overline{C_1})=\{z_i\}$ for each $i\in [i_1]$. Note that $|V(\overline{C_1})|+|V(\overline{D_1})|+\ldots+|V(\overline{D_{i_1}})|=n_1+3(i_1+1)$, and hence, $|V(\overline{C_1})|+|V(\overline{D_i})|\leq n_1+3(i_1+1)-4(i_1-1)=n_1-i_1+7$.
Lemma~\ref{lem:special_orientation} implies that each $\overline{D_i}$ has an orientation $N_i$ such that $\diam(N_i)=\ordiam(\overline{D_i})\leq \frac{|V(\overline{D_i})|}{2}+1$ and $\max \{d_{N_i}(z_i,x), d_{N_i}(x, z_i)\}\leq \frac{|V(\overline{D_i})|+1}{2}$ for any $x\in V(\overline{D_i})$.  Observe that for integers $i,j$ with $1\leq i < j \leq i_1$, if $V(\overline{D_i})\cap V(\overline{D_j})\neq \emptyset$ then $j=i+1$, and we have that $|E(\overline{D_i})\cap E(\overline{D_{i+1}})|\leq 1$. Note that the reverse of the orientation (the direction is reversed for each arc of the given orientation) we obtain from Lemma~\ref{lem:special_orientation} also satisfies the conclusion of Lemma~\ref{lem:special_orientation}. Hence,  we can combine the orientations $N_1, \ldots, N_{i_1}$ in order. (If $e$ is the unique edge in $E(\overline{D_i})\cap E(\overline{D_{i+1}})$ and $N_{i+1}$ orients $e$ in the opposite direction from $N_{i}$, then we replace $N_{i+1}$ by its reverse.) Since any $\overline{D_i}$ and $\overline{C_1}$ have no common edge, we can combine the orientation $N_1\cup \cdots \cup N_{i_1}$ with $M_1$. Let $O_1: =M_1\cup N_1\cup \cdots \cup N_{i_1}$. In particular, we have that 
\begin{align*}
\diam(M_1\cup N_i)&\leq \max \{\diam(M_1), \diam(N_i), \diam(M_1)+\frac{|V(\overline{D_i})|+1}{2}\}\\& 
\leq \frac{|V(\overline{C_1})|}{2}+1+\frac{|V(\overline{D_i})|+1}{2}=\frac{|V(\overline{C_1})|+|V(\overline{D_i})|+3}{2}\\&\leq \frac{n_1-i_1+7+3}{2}=\frac{n_1-i_1+10}{2}\leq \frac{n_1+9}{2}\\&< \frac{n_1}{2}+6.
\end{align*}
For any $x\in V(\overline{D_i})$, 
\begin{align*}
d_{M_1\cup N_i}(x, y_1)&\leq d_{N_i}(x, z_i)+d_{M_1}(z_i, y_1)\\&\leq \frac{|V(\overline{D_i})|+1}{2}+\frac{|V(\overline{C_1})|+1}{2}= \frac{|V(\overline{D_i})|+|V(\overline{C_1})|+2}{2}\\&\leq \frac{n_1-i_1+7+2}{2}\leq \frac{n_1}{2}+4.
\end{align*}
Similarly,  we also have that $d_{M_1\cup N_i}(y_1, x)\leq \frac{n_1}{2}+4$.  
We may now assume $i_1\geq 2$. Otherwise, $O_1=M_1\cup N_1$ is the desired orientation. It remains to show that for distinct $i,j\in [i_1]$, the distance between any vertex in $\overline{D_i}$ and any vertex in $\overline{D_j}$ is at most $\frac{n_1}{2}+6$ in $O_1$. Note that $|V(\overline{D_i})|+|V(\overline{D_j})|\leq n_1+3(i_1+1)-|V(\overline{C_1})|-4(i_1-2)=n_1-i_1+11-|V(\overline{C_1})|$. For any $v_i\in V(\overline{D_i})$ and $v_j\in V(\overline{D_j})$,
\begin{align*}
d_{O_1}(v_i, v_j)&\leq d_{N_i}(v_i, z_i)+d_{M_1}(z_i, z_j)+d_{N_j}(z_j, v_j)\\&
\leq \frac{|V(\overline{D_i})|+1}{2}+d_{M_1}(z_i, z_j)+\frac{|V(\overline{D_j})|+1}{2}\\&
\leq \frac{n_1-i_1+13-|V(\overline{C_1})|}{2}+d_{M_1}(z_i, z_j).
\end{align*}
Since $d_{M_1}(z_i,z_j)\leq \diam(M_1)\leq \frac{|V(\overline{C_1})|}{2}+1$, we obtain $d_{O_1}(v_i, v_j)\leq \frac{n_1-i_1+13}{2}+1\leq \frac{n_1}{2}+6$ when $i_1\geq 3$. Hence we may assume that $i_1=2$, and thus $G_1=\overline{C_1}\cup \overline{D_1}\cup \overline{D_2}$. If $\diam(M_1)=\ordiam(\overline{C_1})\leq \frac{|V(\overline{C_1})|+1}{2}$, we also obtain that $d_{O_1}(v_i, v_j)\leq \frac{n_1}{2}+6$. We may now assume that $\diam(M_1)=\ordiam(\overline{C_1})>\frac{|V(\overline{C_1})|+1}{2}$. It follows from Theorem~\ref{thm:near_triangulation_diameter} that $\overline{C_1}$ is a graph in $\{K_4^-, K_4, W_5, G_6^1, G_6^2, G_6^3, G_8^1\}$. Recall that $C_1=Q_1\cup x_{j_1}y_1 x_{k_1}$ where $Q_1$ is an $x_{j_1}x_{k_1}$-path in $B_1$ not containing $y_1$, $V(\overline{C_1})\cap V(P_0)=\{x_{j_1}, x_{k_1}\}$, and $k_1-j_1\geq i_1+2$ if $i_1\geq 1$. Hence, $k_1-j_1\geq 4$ as $i_1=2$ and so $x_{j_1}P_0x_{k_1}$ has length at least four. By the choice of $P_0$, we know that the distance between $x_{j_1}$ and $x_{k_1}$ in $\overline{C_1}-y_1$ is at least four. We can check that for any near triangulation $H'\in \{K_4^-, K_4, W_5, G_6^1, G_6^2, G_6^3, G_8^1\}$ with outer cycle $C$ and any vertex $w\in V(C)$, the two vertices in $N_C(w)$ have distance at most four in $H'-w$. This implies that $k_1-j_1=4$, and $\overline{C_1}\in \{G_6^2, G_8^1\}$. Recall that $V(\overline{D_1})\cap V(P_0)=\{x_{s_1}, x_{t_1}\}, V(\overline{D_2})\cap V(P_0)=\{x_{s_2}, x_{t_2}\}$ and $j_1+1\le s_1<t_1\leq s_2<t_2\leq k_1-1=j_1+3$. This implies that $t_1=s_2=j_1+2$ and $x_{t_1}=x_{s_2}$. Hence, $x_{t_1}\in V(\overline{D_1})\cap V(\overline{D_2})\neq \emptyset$. It follows that for $v_1\in V(\overline{D_1})$ and $v_2\in V(\overline{D_2})$, we have that 
\begin{align*}
\max \{d_{O_1}(v_1, v_2),d_{O_1}(v_2, v_1)\} &\leq \max\{d_{N_1}(v_1, x_{t_1})+d_{N_2}(x_{t_1}, v_2), d_{N_2}(v_2, x_{t_1})+d_{N_1}(x_{t_1}, v_1)\} \\&\leq \frac{|V(\overline{D_1})|}{2}+1 + \frac{|V(\overline{D_2})|}{2}+1=\frac{n_1+9-|V(\overline{C_1})|}{2}+2\\&\leq \frac{n_1+9-6}{2}+2=\frac{n_1}{2}+\frac{7}{2}<\frac{n_1}{2}+6.
\end{align*}Therefore, for each $i\in [\ell]$, there exists an orientation $O_i$ of $G_i$ such that $\diam(O_i)\leq \frac{n_i}{2}+6$ and $\max \{ d_{O_i} (y_i, x), d_{O_i} ( x,y_i)\}\leq \frac{n_i}{2}+4$ for any $x\in V(G_i)$. Now we are ready to give an orientation $O_H$ of $H=G_0\cup G_1\cup \cdots \cup G_{\ell}$ such that \[\diam(O_H)\leq \frac{n+1-|V(P_0)|}{2}+9.\] 

Recall that $G_0$ has an orientation $O_0$ such that $\diam(O_0)=\ordiam(G_0)\leq \frac{|V(G_0)|+1}{2}$. Similarly, by planarity, the structures of $G_0, G_1, \ldots, G_{\ell}$ allow us to combine the orientations $O_0, O_1, \ldots, O_{\ell}$. Let $O_H:=O_0\cup O_1 \cup \cdots \cup O_{\ell}$. Observe that $|V(G_0)|+n_1+\cdots + n_{\ell}+|V(P_0)|-2=n$, and $n_i\geq 1$ for each $i\in [\ell]$. We know that 
\begin{align*}
\diam(O_0)=\ordiam(G_0)&\leq \frac{|V(G_0)|+1}{2}\leq \frac{n+2-\ell-|V(P_0)|+1}{2}\\&\leq  \frac{n+2-|V(P_0)|}{2}<\frac{n+1-|V(P_0)|}{2}+9,
\end{align*}
and 
\begin{align*}
\diam(O_i)&\leq \frac{n_i}{2}+6\leq \frac{n+2-|V(G_0)|-(\ell-1)-|V(P_0)|}{2}+6\\&\leq \frac{n+2-|V(G_0)|-|V(P_0)|}{2}+6<\frac{n+1-|V(P_0)|}{2}+9.
\end{align*}
For $v_i\in V(G_i)$ and $v_0\in V(G_0)$, we have that 
\begin{align*}
d_{O_H}(v_i, v_0)&\leq d_{O_i}(v_i, y_i)+d_{O_0}(y_i, v_0)\\&\leq \frac{n_i}{2}+4+\frac{|V(G_0)|+1}{2}=\frac{n_i+|V(G_0)|+1}{2}+4\\&\leq \frac{n+2-(\ell-1)-|V(P_0)|+1}{2}+4\\&\leq \frac{n+1-|V(P_0)|}{2}+5<\frac{n+1-|V(P_0)|}{2}+9.
\end{align*}
Similarly, we have that $d_{O_H}(v_0, v_i)\leq \frac{n+1-|V(P_0)|}{2}+5<\frac{n+1-|V(P_0)|}{2}+9$. Hence, we may assume that $\ell\geq 2$, and it remains to show for any distinct $i,j\in [\ell]$ and any $v_i\in V(G_i), v_j\in V(G_j)$, 
\begin{align*}d_{O_H}(v_i, v_j)&\leq d_{O_i}(v_i, y_i)+d_{O_0}(y_i, y_j)+d_{O_j}(y_j, v_j)\\&\leq \frac{n_i}{2}+4+\frac{|V(G_0)|+1}{2}+\frac{n_j}{2}+4=\frac{n_i+n_j+|V(G_0)|+1}{2}+8\\&\leq \frac{n+2-(\ell-2)-|V(P_0)|+1}{2}+8\leq \frac{n+1-|V(P_0)|}{2}+9.
\end{align*}
This completes the proof.
\end{proof}

\section{Planar triangulations with high connectivities} \label{sec:4-conn}
\highconn*
\begin{proof}
Let $G$ be an $n$-vertex $4$-connected plane triangulation. We want to prove that $\ordiam(G)\leq \frac{n-2\diam(G)}{2}+\frac{17}{2}$. We may assume $\diam(G)\geq 9$ by Theorem~\ref{thm:near_triangulation_diameter}. Let $u,v\in V(G)$ such that $d_G(u,v)=\diam(G)$. Since $G$ is $4$-connected, there exist four internally vertex-disjoint $uv$-paths in $G$, say $Q_0, P_0, P_1, P_2$. We may assume that all internal vertices of $Q_0, P_0$ are contained in the interior of the cycle $C_0:=P_1\cup P_2$, and hence, all the internal vertices of $P_1,P_2$ are contained in the exterior of the cycle $C:=Q_0\cup P_0$. We may choose $Q_0, P_0, P_1, P_2$ such that the interior of $C=Q_0\cup P_0$ has maximal vertices and edges, and the exterior of $C_0=P_1\cup P_2$ has maximal vertices and edges. Since $G$ is a plane triangulation, it follows that $V(G)=V(\overline{\Int(C)})\cup V(\overline{\Ext(C_0)})$. Moreover, by the choices of $Q_0, P_0$, we know that every vertex in $C=Q_0\cup P_0$ has a neighbor in $C_0=P_1\cup P_2$.%, and every vertex in $C_0=P_1\cup P_2$ has a neighbor in $C=P_1\cup P_2$. 
\begin{figure}[htbp]
\centering
\resizebox{8cm}{!}{\begin{tikzpicture}[
    x=1cm,
    y=1cm,
    path/.style={draw=black, line width=0.9pt},
    vertex/.style={circle, fill=black, inner sep=2.1pt},
    every node/.style={font=\large}
]

% Endvertices
\coordinate (u) at (0,0);
\coordinate (v) at (7,0);

% The four u--v paths
\draw[path]
    (u) .. controls (1.05,3) and (5.25,3.3) ..
    node[pos=.76, above right=-1pt] {$P_2$}
    (v);

\draw[path]
    (u) .. controls (1.55,1.2) and (5.15,1.5) ..
    node[pos=.76, above=2pt] {$Q_0$}
    (v);

\draw[path]
    (u) .. controls (1.55,-1) and (5.05,-0.85) ..
    node[pos=.76, below=2pt] {$P_0$}
    (v);

\draw[path]
    (u) .. controls (1.20,-2.55) and (5.20,-2.70) ..
    node[pos=.77, below right=-2pt] {$P_1$}
    (v);

% Endvertices and labels
\node[vertex] at (u) {};
\node[vertex] at (v) {};
\node[left=5pt]  at (u) {$u$};
\node[right=5pt] at (v) {$v$};

% Region label
\node at (3.25,0.25) {$F$};
\node at (3,-2.4) {$G_0$};
\end{tikzpicture}}
\caption{The paths $Q_0, P_0, P_1, P_2,$ and $F=\overline{\Int(Q_0\cup P_0)}, G_0=\overline{\Ext(P_1\cup P_2)}$}
\end{figure}

Let $F:=\overline{\Int(C)}=\overline{C}$ and $G_0:=\overline{\Ext(C_0)}$. Note that $F$ is a near triangulation with outer cycle $C=Q_0\cup P_0$, $G_0$ is a near triangulation with outer cycle $C_0=P_1\cup P_2$, and $V(F)\cap V(G_0)=\{u,v\}$. Hence, $|V(F)|+|V(G_0)|=n+2$. Since $G_0$ has at least $|V(P_1)|+|V(P_2)|-2\geq 2\diam(G)\geq 18$ vertices, it follows from Theorem~\ref{thm:near_triangulation_diameter} that $\ordiam(G_0)\leq \frac{|V(G_0)|+1}{2}$. Let $O_0$ be an optimal orientation of $G_0$, and so $\diam(O_0)=\ordiam(G_0)\leq \frac{n-|V(F)|+3}{2}$. Let $H:=\overline{\Ext(C)}$ and then $H$ is a near triangulation with outer cycle $C$. We know that $G_0$ is a subgraph of $H$. We claim that $\ordiam(H)\leq \ordiam(G_0)+4$.

Recall that $O_0$ is an optimal orientation of $G_0$. It suffices to find an orientation $O_H$ of a spanning subgraph of $H$ such that $\diam(O_H)\leq \diam(O_0)+4$. Note that $G_0\subseteq H$ and $V(H)\setminus V(G_0)=V(Q_0)\cup V(P_0)\setminus \{u,v\}$. To obtain $O_H$, first we orient each edge in $E(G_0)\subseteq E(H)$ by $O_0$, and then for each $x\in V(Q_0)\cup V(P_0)\setminus \{u,v\}$, we orient some edges incident with $x$ so that $\max\{d_{O_H}(x, V(G_0)), d_{O_H}( V(G_0),x)\}\leq 2$. Define $O_H^{(0)}$ such that the vertex set $V(O_H^{(0)}):=V(H)$ and the arc set $A(O_H^{(0)}):=A(O_0)$.    By the choices of $Q_0, P_0$, we know every vertex in $Q_0\cup P_0-\{u,v\}$ has at least one neighbor in $G_0$. We first consider the vertices in $P_0-\{u,v\}$. Suppose $P_0=x_0x_1\ldots x_{|V(P_0)|-1}$, where $x_0=u, x_{|V(P_0)|-1}=v$. We know that  $x_j$ is adjacent to a vertex in $G_0$, say $a_j$, for each $j\in [|V(P_0)|-2]$. Consider $x_1$. Note that $x_1x_0, x_1a_1$ are two edges in $E(H)\setminus E(G_0)$ and they are not oriented. We orient those two edges with one toward $x_1$ and one away from $x_1$, for example, $x_1 \to x_0, a_1\to x_1$, and we let $ O_H^{(1)}:=O_H^{(0)}\cup \{(x_1, x_0), (a_1, x_1)\}$. Hence, $\max \{d_{O_H^{(1)}}(x_1, V(G_0)), d_{O_H^{(1)}}( V(G_0), x_1)\}\leq 1$.
For each $j\in \{2, \ldots, |V(P_0)|-2\}$, we build $O_H^{(j)}$ according to the following process. If $x_j$ has two neighbors in $G_0$, say $a_j$ and $a_j'$, we let $O_H^{(j)}:=O_H^{(j-1)}\cup \{ (x_j, a_j), (a_j', x_j)\}$ and so $\max \{d_{O_H^{(j)}}(x_j, V(G_0)), d_{O_H^{(j)}}( V(G_0), x_j)\}\leq 1$. Now we suppose $a_j$ is the unique neighbor of $x_j$ in $G_0$. It follows that $a_j$ is also adjacent to $x_{j-1}$ as $G$ is a plane triangulation. If the edge $x_{j-1}a_j$ is oriented in $O_H^{(j-1)}$ then we orient the edges $x_{j-1}x_j, x_j a_j $ such that $x_{j-1}, x_j, a_j$ is contained in a directed triangle. Otherwise, we just make the triangle $x_{j-1}x_j a_j x_{j-1}$ a directed triangle. Let $O_H^{(j)}$ denote the new orientation obtained by adding the arcs from the triangle $x_{j-1}x_j a_j x_{j-1}$ to $O_H^{(j-1)}$. Observe that $\max\{d_{O_H^{(j)}}(x_j, a_j),d_{O_H^{(j)}}(a_j, x_j) \}\leq 2$, and so $\max\{d_{O_H^{(j)}}(x_j, V(G_0)),d_{O_H^{(j)}}(V(G_0), x_j) \}\leq 2$. Note that $O_H^{(|V(P_0)|-2)}$ contains all vertices of $P_0$, and in this orientation, every vertex $x$ in $P_0$ has $\max\{d_{O_H}(x, V(G_0)), d_{O_H}( V(G_0),x)\}\leq 2$. We repeat the same process to the vertices in $Q_0$, and get an orientation $O_H$ such that for any vertex $x\in V(P_0\cup Q_0)$, we have $\max \{d_{O_H} (x, V(G_0)), d_{O_H}(V(G_0), x)\}\leq 2$. Recall that for any $v_1, v_2\in V(G_0)$, we have that $d_{O_H} (v_1, v_2)\leq d_{O_0}(v_1, v_2)\leq \diam(O_0)$. It follows that \[\ordiam(H)\leq \diam(O_H)\leq \diam(O_0)+4=\ordiam(G_0)+4\leq \frac{n-|V(F)|+3}{2}+4.\]

Now we orient the edges in $\overline{C}-E(C)=F-E(C)$. Lemma~\ref{lem:near_trian_inteorior} gives an orientation $D$ of $F-E(C)$ such that for any $u_1, u_2\in V(F)\setminus V(C)$ with $u_1, u_2$ contained in a same component of $F-C$, we have $d_{D}(u_1, u_2)\leq 2d_{F-C}(u_1, u_2)$; and for any $w\in V({F})\setminus V(C)$, we have that 
\[\max\{d_{D}(w, V(C)), d_{D}(V(C), w)\}\leq 2 d_{{F}}(w, V(C)).\] 
Note that $E(H)\cap E(F-E(C))=\emptyset$ and $G=H\cup (F-E(C))$. We let $O:=O_H\cup D$, which is an orientation of a spanning subgraph of $G$. We only need to show that $$\diam(O)\leq \frac{n-(|V(Q_0)|+|V(P_0)|-2)}{2}+\frac{17}{2},$$ and hence, \[\ordiam(G)\leq\diam(O)\leq \frac{n-(|V(Q_0)|+|V(P_0)|-2)}{2}+\frac{17}{2}\leq \frac{n-2\diam(G)}{2}+\frac{17}{2}. \]
We may assume that $V({F})\setminus V(C)\neq \emptyset$. Otherwise, \[\diam(O)\leq \diam(O_H)\leq \frac{n+3-|V({F})|}{2}+4 \leq \frac{n+3-(|V(Q_0)|+|V(P_0)|-2|)}{2}+4.\]
Then let $B_1, \ldots, B_s$, for some integer $s\geq 1$, denote all the components of $\overline{C}-C={F}-C$, and let $n_i:=|V(B_i)|$ for each $i\in [s]$. Note that $n_1+\ldots +n_s+|V(Q_0)|+|V(P_0)|-2=|V({F})|$. We first claim that for each vertex $w_i\in V(B_i)$, we have that $d_{{F}}(w_i, V(C))\leq \frac{n_i+3}{4}$. Since $G$ is $4$-connected, we know there exist four paths from $w_i$ to $V(C)$ such that each path ends at a distinct vertex in $C$ and the paths share only the starting vertex $w_i$ with all the internal vertices contained in $B_i$. Then $d_{{F}}(w_i, V(C))\leq \frac{n_i+3}{4}$ follows from the existence of such paths. Thus, for any vertex $w_i\in V(B_i)$, we have that\[\max\{d_{D}(w_i, V(C)), d_{D}(V(C), w_i)\}\leq 2 d_{{F}}(w_i, V(C))\leq \frac{n_i+3}{2},\] 
  and then for any $w' \in V(H)$, we have that 
  \begin{align*}d_O(w_i, w')&\leq d_{D}(w_i, V(C))+\diam(O_H)\\&\leq \frac{n_i+3}{2} +\frac{n+3-|V({F})|}{2}+4\\&\leq  \frac{n-(|V({F})|-n_i)}{2}+7\\&\leq \frac{n-(|V(Q_0)|+|V(P_0)|-2)}{2}+7.
\end{align*}
  When $s\geq 2$, for distinct $i,j\in [S]$ and $w_i\in V(B_i), w_j\in V(B_j)$, we have that 
  \begin{align*}d_{O}(w_i, w_j)&\leq d_{D}(w_i, V(C))+\diam(O_H)+d_{D}(V(C), w_j)\\&
  \leq \frac{n_i+3}{2} +\frac{n+3-|V({F})|}{2}+4+ \frac{n_j+3}{2}\\&\leq \frac{n-(|V({F})|-n_i-n_j)}{2}+\frac{17}{2}
  \\& \leq \frac{n-(|V(Q_0)|+|V(P_0)|-2)}{2}+\frac{17}{2}.
  \end{align*}
Now it remains to show that for any $u_i, w_i\in V(B_i)$,  the distance from $u_i$ to $w_i$ in $O$ is at most $\frac{n-(|V(Q_0)|+|V(P_0)|-2)}{2}+\frac{17}{2}$.
Note that for any $u_iw_i$-path $P$ in $G$, either $P\subseteq B_i$ or $P$ contains two subpaths with one from $u_i$ to $V(F)$, one from $w_i$ to $V(C)$ and both having internal vertices contained in $B_i$. We know that $G$ has four internally vertex-disjoint $u_iw_i$-paths by $4$-connectedness of $G$. The existence of such four $u_iw_i$-paths implies that either $d_{F-C}(u_i, w_i)=d_{B_i}(u_i, w_i)\leq \frac{n_i+2}{4}$ or $d_{{F}}(u_i, V(C)) +d_{{F}}(w_i, V(C))\leq \frac{n_i+6}{4}$. 
The first case gives that 
\begin{align*}d_O(u_i, w_i)&\leq d_{D}(u_i, w_i)\leq 2d_{F-C}(u_i, w_i)\leq \frac{n_i+2}{2}\\&\leq \frac{n-(|V(Q_0)|+|V(P_0)|+|V(P_1)|+|V(P_2)|-6)+2}{2}
\\& \leq \frac{n-(|V(Q_0)|+|V(P_0)|+10+10-6)+2}{2}
\\& < \frac{n-(|V(Q_0)|+|V(P_0)|-2)}{2}+\frac{17}{2}
\end{align*}
as each $uv$-path has at least $\diam(G)+1\geq 10$ vertices, and the second case implies that 
\begin{align*}d_{O}(u_i, w_i)&\leq d_{D}(u_i, V(C))+\diam(O_H)+d_{D}(V(C), w_i)\\&\leq 2d_{{F}}(u_i, V(C))+ \diam(O_H)+2d_{{F}}(w_i, V(C))\\&\leq \frac{n_i+6}{2}+\frac{n+3-|V({F})|}{2}+4
\\& \leq \frac{n-(|V({F})|-n_i)}{2}+\frac{17}{2}\\&
\leq \frac{n-(|V(Q_0)|+|V(P_0)|-2)}{2}+\frac{17}{2}.
\end{align*}
This completes the proof.
\end{proof}
\begin{remark}
The argument for Theorem~\ref{main:4-conn} also yields a stronger
diameter-sensitive bound in the $5$-connected case as $5$-connectedness gives better distance bounds among the vertices in $F$ and $V(C)$. We briefly indicate
the necessary modifications.

 In the $5$-connected case, Menger's theorem allows us to choose $C$ so that $F=\overline{\Int(C)}$ has three internally vertex-disjoint $uv$-paths, 
 while $H=\overline{\Ext(C)}$ has $\ordiam(H)\leq \frac{n-|V(F)|+3}{2}+4$. Let $B$ be a component of $V(F)\setminus V(C)$, and let $n_B:=|V(B)|$. The $5$-connectivity of $G$ gives  $$d_F(w, V(C))\leq \frac{n_B}{5}+O(1)$$
for any $w\in V(B)$. For $w,w'\in V(B)$, either $d_{F-C}(w, w')=d_B(w, w')=\frac{n_B}{5}+O(1)$ or $$d_F(w, V(C))+d_F(w', V(C))\leq \frac{n_B}{5}+O(1).$$ Then with an orientation of $F-E(C)$ from Lemma~\ref{lem:near_trian_inteorior}, we have that
\begin{align*}\ordiam(G) &\leq \ordiam(H)+\frac{2}{5}|V(F)\setminus V(C)|+O(1)\\&
\leq \frac{n-|V(F)|}{2}+\frac{2}{5}(|V(F)|- |V(C)|)+O(1)
\\& \leq  \frac{n-(|V(C)|+\frac{1}{5}(|V(F)|-|V(C)|))}{2}+O(1)\\&\leq \frac{n-\frac{11}{5}\diam(G)}{2}+O(1).
\end{align*}
As the remainder of the argument is identical to the proof of
Theorem~\ref{main:4-conn}, we omit the details.

Combining with Theorem~\ref{thm:ordiam_vs_diam}, we obtain that every $n$-vertex $5$-connected planar triangulation $G$ has $\ordiam(G)\leq \frac{10}{31}n+O(1)$.
\end{remark}

\section*{Acknowledgments}
The author is grateful to Xingxing Yu and Zhiyu Wang for helpful discussions on this problem. The author used ChatGPT (OpenAI) in developing the face-coloring argument leading to Theorem~\ref{thm:ordiam_vs_diam} and language polishing.

\end{document}